\numberwithin{equation}{section}
\newtheorem{thm}{Theorem}[section]
\newtheorem{lemma}[thm]{Lemma}
\newtheorem{prop}[thm]{Proposition}
\newtheorem{cor}[thm]{Corollary}
{\theorembodyfont{\rmfamily}
\newtheorem{defn}[thm]{Definition}
\newtheorem{example}[thm]{Example}

\newtheorem{rmk}[thm]{Remark}
}
\newcommand{\qed}{\hfill \mbox{\raggedright \rule{.07in}{.1in}}}
\newenvironment{proof}{\vspace{1ex}\noindent{\bf
Proof}\hspace{0.5em}}{\hfill\qed\vspace{1ex}}
\newenvironment{pfof}[1]{\vspace{1ex}\noindent{\bf Proof of
#1}\hspace{0.5em}}{\hfill\qed\vspace{1ex}}
\newcommand{\R}{{\mathbb R}}
\newcommand{\C}{{\mathbb C}}
\newcommand{\Z}{{\mathbb Z}}
 \newcommand{\E}{{\mathbb E}}
 \newcommand{\T}{\mathbb T}
 \newcommand{\bbS}{\mathbb S}
 \newcommand{\BBW}{\mathbb W}
\newcommand{\cB}{{\mathcal B}}
\newcommand{\cF}{{\mathcal F}}
\newcommand{\cW}{{\mathcal W}}
\newcommand{\cS}{\mathcal{S}}
\newcommand{\cK}{\mathcal{K}}
\newcommand{\cA}{\mathcal{A}}
\newcommand{\cL}{\mathcal{L}}
\newcommand{\cP}{\mathcal{P}}
\newcommand{\vf}{\varphi}
\newcommand{\ve}{\varepsilon}
\newcommand{\bmu}{\bar{\mu}}
\newcommand{\eps}{\epsilon}
\newcommand{\sgn}{\operatorname{sgn}}
\newcommand{\diam}{\operatorname{diam}}
\renewcommand{\Re}{\operatorname{Re}}
\newcommand{\SMALL}{\textstyle}
\title{Martingale approximations and  anisotropic Banach spaces with an application to the time-one map of a Lorentz gas}
\author{
Mark Demers\thanks{Department of Mathematics, Fairfield University, Fairfield, CT 06824, USA.  Email: mdemers@fairfield.edu}
 \and Ian Melbourne\thanks{Mathematics Institute, University of Warwick, Coventry, CV4 7AL, UK.  Email: i.melbourne@warwick.ac.uk}
 \and Matthew Nicol\thanks{Department of Mathematics, University of Houston, Houston TX 77204-3008, USA.  Email: nicol@math.uh.edu}
}
\date{30 December 2018.  Revised 13 February 2020}
\begin{document}

\maketitle

\begin{abstract}
In this paper, we show how the Gordin martingale approximation method fits into the anisotropic Banach space framework. In particular, for the time-one map of a finite horizon planar periodic Lorentz gas, we prove that H\"older observables satisfy statistical limit laws such as the central limit theorem and associated invariance principles.
Previously, these properties were known only for a restricted class of observables, excluding for instance velocity.
\end{abstract}

\section{Introduction}

The traditional approach to proving decay of correlations and statistical limit laws for deterministic dynamical systems, following~\cite{Bowen75, Ruelle78,Sinai72}
and continuing with Young~\cite{Young98,Young99},
involves symbolic coding.  In particular, by quotienting along stable leaves one passes from an invertible dynamical system to
a one-sided shift.  Decay of correlations is then a consequence of the contracting properties of the associated transfer operator.  In addition, Nagaev perturbation arguments~\cite{Gouezel15,HennionHerve} and the martingale approximation method of Gordin~\cite{Gordin69} are available in this setting, leading to numerous statistical limit laws.  These results on decay of correlations and statistical limit laws are then readily passed back to the original dynamical system.

A downside to this approach is that geometric and smooth structures associated to the underlying dynamical system are typically destroyed by symbolic coding.
In recent years, a method proposed by~\cite{BlankKellerLiverani} and developed extensively by numerous authors (for recent articles with up-to-date references see~\cite{Baladi17,Demers18}) uses anisotropic Banach spaces of distributions to study the underlying dynamical system directly.  In particular, the method does not involve quotienting along stable manifolds.  
%So far the emphasis has been on using this method to establish spectral properties of transfer operators.  
This leads to results on rates of decay of correlations  and also to various statistical limit laws via Nagaev perturbation arguments, see especially Gou\"ezel~\cite{Gouezel10}.

However, so far
Gordin's martingale approximation argument has been absent from the anisotropic Banach space framework.
This is the topic of the current paper.
The utility of such an approach is illustrated by the following example.

\begin{example} \label{ex:gas}
The landmark result of Young~\cite{Young98} established exponential decay of correlations for 
the collision map corresponding to planar periodic dispersing billiards with finite horizon.  
The method, which involves symbolic coding, also yields the central limit theorem (CLT) for H\"older observables, recovering results of~\cite{BunimovichSinaiChernov91}.

Turning to the corresponding flow, 
known as the finite horizon planar periodic Lorentz gas, the CLT follows straightforwardly from the result for billiards~\cite{BunimovichSinaiChernov91,MT04}.  However, decay of correlations for the Lorentz gas and the CLT for the time-one map of the Lorentz gas are much harder.
Superpolynomial decay of correlations was established for sufficiently regular observables in~\cite{M07} (see also~\cite{M18}) using symbolic coding and Dolgopyat-type estimates~\cite{Dolgopyat98b}.  This method also yields the CLT for the time-one map~\cite{AMV15,MT02}, but again only for sufficiently regular observables.
Here, ``regular'' means smooth along the flow direction, so this excludes many physically relevant observables such as velocity.
The rate of decay of correlations 
 was improved to subexponential decay~\cite{Chernov07} and finally in a recent major breakthrough to exponential decay~\cite{BaladiDemersLiverani18}.
Both references handle H\"older observables,  suggesting that statistical limit laws such as the CLT for the time-one map should hold for general H\"older observables.  

Currently the Nagaev method is unavailable for Lorentz gases, and as a consequence the CLT for the time-one map was previously unavailable except for a restricted class of observables.  We show that the Gordin approach is applicable and hence the CLT and related limit laws are indeed satisfied by H\"older observables for these examples.
In particular, observables such  as velocity are covered for the first time.  
\end{example}

In the remainder of the introduction, we describe some of the limit laws that follow from the methods in this paper.
For definiteness, we focus on Example~\ref{ex:gas}.
Let $X$ be the three-dimensional phase space corresponding to a finite horizon planar periodic Lorentz gas, with invariant volume measure $\mu$,
and let $T:X\to\ X$ be the time-one map of the Lorentz flow.  Let $\phi:X\to\R$ be a H\"older observable with mean zero and define the Birkhoff sum $\phi_n=\sum_{j=0}^{n-1} \phi \circ T^j$.  
It follows from~\cite{BaladiDemersLiverani18,Chernov07} that
we can define 
\[
\sigma^2=\lim_{n\to\infty} n^{-1}\int_X \phi_n^2\,d\mu=\sum_{n=-\infty}^\infty
\int_X\phi\,\phi\circ T^n\,d\mu.
\]
By~\cite[Theorem~B and Remark~1.1]{AMV15}, typically $\sigma^2>0$ (the case $\sigma^2=0$ is of infinite codimension).
We obtain the following results.\footnote{In what follows, $\to_d$ denotes convergence in distribution
while $\to_w$ denotes weak convergence.}

\vspace{1ex}
\noindent{\bf CLT:}
$n^{-1/2}\phi_n\to _d N(0,\sigma^2)$ as $n\to\infty$.  That is
\[
\lim_{n\to\infty} \mu(x \in X:n^{-1/2}\phi_n(x)\le c)=(2\pi\sigma^2)^{-1/2}\int_{-\infty}^c e^{-y^2/(2\sigma^2)}\,dy
\quad\text{for all $c\in\R$.}
\]

\vspace{1ex}
\noindent{\bf Weak invariance principle (WIP):}
Define $W_n(t)=
n^{-1/2}\phi_{nt}$ for $t=0,\frac1n,\frac2n,\dots$ and linearly interpolate to obtain $W_n\in C[0,1]$.  
Then $W_n\to_w W$ where
$W$ denotes Brownian motion with variance $\sigma^2$. 

\vspace{1ex}
\noindent{\bf Moment estimates:} 
For every $p\ge1$ there exists $C_p>0$ such that
$|\phi_n|_p\le C_pn^{1/2}$.  Consequently, $\lim_{n\to\infty}n^{-p/2}\|\phi_n\|_p^p=\E|Y|^p$ where $Y=_d N(0,\sigma^2)$.

\vspace{1ex}
\noindent{\bf Homogenization:}
Now suppose that $\phi:X\to\R^k$.  We continue to suppose that
$\phi$ is $C^\eta$ for some $\eta\in(0,1]$ and that $\int_X \phi\,d\mu=0$.
Consider the fast-slow system
\begin{align} \nonumber
 x(n+1) & =x(n)+\eps^2a(x(n))+\eps b(x(n))\phi(y(n)), \\
y(n+1) & =Ty(n), \label{eq:fs}
\end{align}
where $x(0)=\xi\in\R^d$
 and $y(0)$ is drawn randomly from $(X,\mu)$.
We suppose that 
$a:\R^d\to\R^d$ lies in $C^{1+\eta}$ and
$b:\R^d\to\R^{d\times k}$ lies in $C^{2+\eta}$.
Solve~\eqref{eq:fs} to obtain
\[
x_\eps(n)=\xi+\eps^2\sum_{j=0}^{n-1}a(x_\eps(j))+\eps\sum_{j=0}^{n-1}b(x_\eps(j))\phi(y(j)), \qquad y(n)=T^ny(0),
\]
and let $\hat x_\eps(t)=x_\eps([t/\eps^2])$.  This defines a random process on the probability space $(X,\mu)$ depending on $y(0)\in X$.
Then $\hat x_\eps\to_w Z$ as $\eps\to0$, where $Z$ satisfies an It\^{o} stochastic differential equation $dZ=\tilde a(Z)dt+b(Z)\,dW$, $Z(0)=\xi$,
where $W$ is a $k$-dimensional Brownian motion with covariance matrix $\Sigma$ and 
\begin{align} \label{eq:drift}
\tilde a(x)=a(x)+\sum_{\alpha=1}^d\sum_{\beta,\gamma=1}^k E^{\gamma\beta}\frac{\partial b^\beta}{\partial x_\alpha}(x)b^{\alpha\gamma}(x).
\end{align}
Here, $b^\beta$ is the $\beta$'th column of $b$ and the matrices $\Sigma,E\in\R^{k\times k}$ are given by
\[
\Sigma^{\beta\gamma}  = \sum_{n=-\infty}^\infty \int_X \phi^\beta\,\phi^\gamma\circ T^n\,d\mu, \qquad
E^{\beta\gamma}  = \sum_{n=1}^\infty \int_X\phi^\beta\,\phi^\gamma\circ T^n\,d\mu.
\]

\vspace{2ex}
The remainder of this paper is organized as follows.
In Section~\ref{sec:mart}, we recall background material on martingale-coboundary decompositions and statistical limit laws.
In Section~\ref{sec:main}, we state an 
abstract theorem on obtaining martingale-coboundary decompositions for invertible systems with stable directions.
In Section~\ref{sec:gas}, we apply our results to the time-one map of the Lorentz gas.

\section{Martingale approximations}
\label{sec:mart}

In this section, we review the approach going back to Gordin~\cite{Gordin69}.
This method yields martingale approximations for observables of dynamical systems leading to various limit theorems.  
Related references include~\cite{BalintGouezel06,BM18,DedeckerMerlevedePene13,DedeckerRio00,Heyde75, Liverani96, Viana,Volny93,Volny07}.
Let $(X,\mu)$ be a probability space, and let
$T: X\to X$ be an invertible
ergodic measure-preserving transformation.
Let $\cF_0$ be a sub-$\sigma$-algebra of the underlying $\sigma$-algebra on $X$
such that $T^{-1}\cF_0\subseteq\cF_0$.
Consider an observable\footnote{Most observables in this paper are real-valued, but occasionally in this section we consider observables with values in $\R^k$.  We write $L^1(X,\R^k)$ to denote vector-valued observables and write $L^1(X)$ instead of $L^1(X,\R)$.}
 $\phi\in L^1(X)$ with $\int_X\phi\,d\mu=0$.

\begin{defn} \label{def:mart}
We say that 
$\phi$ admits a {\em martingale-coboundary decomposition} if
\[
\phi=m+\chi\circ T-\chi,
\]
where $m,\chi\in L^1(X)$, 
$m$ is $\cF_0$-measurable, and $\E[m|T^{-1}\cF_0]=0$.
\end{defn}

The conditions on $m$ in Definition~\ref{def:mart} mean that $\{m\circ T^{-n}:n\in\Z\}$ is a sequence of martingale differences with respect to the 
filtration $\{T^n\cF_0:n\in\Z\}$.  

\begin{prop}  \label{prop:mart}
Let $\phi\in L^p(X)$ for some $p\ge1$.  Suppose that
\begin{equation} \label{eq:mart}
 \SMALL  
 \sum_{n\ge 1} |\E[\phi\circ T^{-n}|\cF_0]|_p<\infty, \qquad
\sum_{n\ge0} \big|\E[\phi\circ T^n|\cF_0]-\phi\circ T^n\big|_p<\infty.
\end{equation}
Then $\phi$ admits a martingale-coboundary decomposition with $m,\,\chi\in L^p(X)$.
\end{prop}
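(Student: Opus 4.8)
The plan is to construct $m$ and $\chi$ explicitly by the standard Gordin recipe and then verify convergence of the defining series using the hypotheses~\eqref{eq:mart}. First I would set
\[
\chi=\sum_{n\ge1}\E[\phi\circ T^{-n}|\cF_0]+\sum_{n\ge0}\big(\E[\phi\circ T^n|\cF_0]-\phi\circ T^n\big),
\]
where the first sum is an $L^p$-convergent series by the left-hand condition in~\eqref{eq:mart} and the second by the right-hand condition; hence $\chi\in L^p(X)$. Then define $m=\phi-\chi\circ T+\chi$, so that the decomposition $\phi=m+\chi\circ T-\chi$ holds by construction with $m\in L^p(X)$. Everything then reduces to checking the two measurability-type properties of $m$ in Definition~\ref{def:mart}.

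The key algebraic step is to rewrite $m$ as a telescoping-type expression so the two properties become transparent. Writing $U\psi=\psi\circ T$ and $P\psi=\E[\psi|\cF_0]$, a direct manipulation gives
\[
m=\sum_{n\ge0}\E[\phi\circ T^n|\cF_0]\circ T^{-n}-\sum_{n\ge1}\E[\phi\circ T^n|\cF_0]\circ T^{-n}+\;(\text{a correction from the }T^{-n}\text{ tail}),
\]
but cleaner is to observe that $m$ equals the $L^p$-limit of $\sum_{|n|\le N}$ of terms each of which is $\cF_0$-measurable after composing with the appropriate power of $T$. Concretely, using $T^{-1}\cF_0\subseteq\cF_0$, each summand $\E[\phi\circ T^n|\cF_0]\circ T^{-n}$ is $\cF_0$-measurable when $n\le 0$ (since $\cF_0=T^0\cF_0\subseteq T^{-n}\cF_0$ means $\E[\cdot|\cF_0]\circ T^{-n}$ is $T^{-n}\cF_0\subseteq\cF_0$-measurable) and similarly each term from the first series is $\cF_0$-measurable. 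So $\chi$ and hence $m$ are $\cF_0$-measurable. To show $\E[m|T^{-1}\cF_0]=0$, I would compute $\E[\chi\circ T-\chi|T^{-1}\cF_0]$ and $\E[\phi|T^{-1}\cF_0]$ separately, using the tower property and the identity $\E[\psi\circ T|T^{-1}\cF_0]=\E[\psi|\cF_0]\circ T$; the two series telescope against each other so that $\E[m|T^{-1}\cF_0]$ collapses term-by-term to $0$.

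The main obstacle is purely bookkeeping: one must justify interchanging the conditional expectation $\E[\cdot|T^{-1}\cF_0]$ with the infinite sums defining $\chi$, which is legitimate because the series converge in $L^p\subseteq L^1$ and conditional expectation is an $L^1$-contraction, and then track the telescoping carefully so that no boundary term survives. A secondary point requiring care is the case $p=1$, where one only has $L^1$ convergence and cannot invoke any Hilbert-space orthogonality; but since every step above uses only $L^1$ continuity of $U$ and of conditional expectation together with the summability hypotheses~\eqref{eq:mart}, the argument goes through verbatim. I would conclude by noting that the martingale-difference property of $\{m\circ T^{-n}\}$ with respect to $\{T^n\cF_0\}$ is then immediate from $\E[m|T^{-1}\cF_0]=0$ and $\cF_0$-measurability of $m$, as already remarked after Definition~\ref{def:mart}.
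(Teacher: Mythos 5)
Your definitions of $\chi$ and $m$ are exactly the standard ones and do match the paper, and the observation that all series converge in $L^p$ by~\eqref{eq:mart} is correct.  However, your verification of the $\cF_0$-measurability of $m$ has a genuine gap.  You conclude ``So $\chi$ and hence $m$ are $\cF_0$-measurable,'' but $\chi$ is \emph{not} $\cF_0$-measurable: the second series $\sum_{n\ge0}(\E[\phi\circ T^n|\cF_0]-\phi\circ T^n)$ contains the terms $\phi\circ T^n$, which are not $\cF_0$-measurable for a general observable $\phi$.  (And even if $\chi$ were $\cF_0$-measurable, $m=\phi+\chi-\chi\circ T$ could not be, since $\phi$ itself is not.)  The intermediate display you write, $m=\sum_{n\ge0}\E[\phi\circ T^n|\cF_0]\circ T^{-n}-\sum_{n\ge1}\E[\phi\circ T^n|\cF_0]\circ T^{-n}+(\text{correction})$, also cannot be right as stated: the two visible sums cancel to the single term $\E[\phi|\cF_0]$, so the ``correction'' would have to be all of $m-\E[\phi|\cF_0]$, which defeats the purpose.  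Moreover the summands $\E[\phi\circ T^n|\cF_0]\circ T^{-n}=\E[\phi|T^n\cF_0]$ with $n\ge1$ are $T^n\cF_0$-measurable, and $T^n\cF_0\supseteq\cF_0$ for $n\ge1$, so such terms are precisely the ones that are \emph{not} $\cF_0$-measurable.

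The correct rewriting, which is the crux of the argument, is the bilateral telescoping identity
\[
m=\sum_{n=-\infty}^{\infty}\bigl(g_n-g_n\circ T\bigr),
\qquad g_n:=\E[\phi\circ T^n|\cF_0],
\]
obtained by expanding $\phi+\chi-\chi\circ T$ and observing that all the $\phi\circ T^n$ terms cancel.  In this form each $g_n$ is $\cF_0$-measurable by definition, and each $g_n\circ T$ is $T^{-1}\cF_0$-measurable, hence $\cF_0$-measurable since $T^{-1}\cF_0\subseteq\cF_0$.  This gives $\cF_0$-measurability of $m$.  Re-indexing the same identity as $m=\sum_n(g_{n+1}-g_n\circ T)$ and using the tower property with $g_n\circ T=\E[\phi\circ T^{n+1}|T^{-1}\cF_0]$ gives $\E[g_n\circ T|T^{-1}\cF_0]=\E[g_{n+1}|T^{-1}\cF_0]$, so $\E[m|T^{-1}\cF_0]=0$ after exchanging $\E[\cdot|T^{-1}\cF_0]$ with the $L^1$-convergent sum (a point you correctly flagged).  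Your sketch of this last step is in the right spirit, but without the $\sum(g_n-g_n\circ T)$ identity there is nothing concrete for it to telescope against.
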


\begin{proof}  
This is a standard argument~\cite{Heyde75,Volny93}.  We give the details for completeness.
By~\eqref{eq:mart},
\[
\SMALL \chi=\sum_{n\ge 0} (\E[\phi\circ T^n|\cF_0]-\phi\circ T^n)
+\sum_{n\ge 1} \E[\phi\circ T^{-n}|\cF_0]
\]
converges in $L^p(X)$.
Define
$m=\phi+\chi-\chi\circ T\in L^p(X)$.
Then
\begin{equation} \label{eq:m}
\SMALL m=\sum_{n=-\infty}^\infty (g_n-g_n\circ T)
=\sum_{n=-\infty}^\infty (g_{n+1}-g_n\circ T),
\end{equation}
where $g_n=\E[\phi\circ T^n|\cF_0]$.  

Clearly, $g_n=\E[\phi\circ T^n|\cF_0]$ is $\cF_0$-measurable.  Also,
$g_n\circ T$ is measurable with respect to $T^{-1}\cF_0\subseteq \cF_0$.
Hence $m$ is $\cF_0$-measurable.

Next, note that
$g_n\circ T=\E[\phi\circ T^n|\cF_0]\circ T=
\E[\phi\circ T^{n+1}|T^{-1}\cF_0]$.  
Hence
\[
\E[g_n\circ T|T^{-1}\cF_0]=\E[\phi\circ T^{n+1}|T^{-1}\cF_0]
=\E[\E[\phi\circ T^{n+1}|\cF_0]|T^{-1}\cF_0]
=\E[g_{n+1}|T^{-1}\cF_0],
\]
where we used that $T^{-1}\cF_0\subseteq \cF_0$.
Substituting into~\eqref{eq:m},
we obtain $\E[m|T^{-1}\cF_0]=0$ as required.
\end{proof}

% In the remainder of this section, we recall how various statistical properties of dynamical systems follow using martingale methods.  Limit laws such as the CLT and WIP (Corollary~\ref{cor:mart}) and moment estimates (Proposition~\ref{prop:mom}) are standard examples of such statistical properties.
%  Their iterated versions (Propositions~\ref{prop:iwip} and~\ref{prop:imom}) are perhaps less familiar, but these are precisely the additional ingredients required for solving homogenization problems (Theorem~\ref{thm:fs}).
% In particular, the modified drift in~\eqref{eq:drift} should be viewed as a correction to the stochastic integral in the limiting stochastic differential equation, thereby resolving the central issue in convergence to stochastic integrals.
% The form of the correction is established by the iterated WIP, while the iterated moment control provides the required tightness.  The triangular sums $\sum_{0\le i<j<n}$ in the definitions of $\BBW_n^{\beta\gamma}$ and $S_n^{\beta\gamma}$ below arise naturally when viewed as approximating Riemann sums and hence are the central and highly nontrivial objects.  (In comparison, the behaviour of the symmetrized sums $\sum_{0\le i,j<n}$ is determined by the standard WIP and iterated moments.)

\subsection*{Central limit theorem and invariance principles}

\begin{cor} \label{cor:mart}
Assume that $\phi\in L^2(X)$ and conditions~\eqref{eq:mart} hold with $p=2$.
Then
the CLT and WIP hold with $\sigma^2=\int_X m^2\,d\mu=\lim_{n\to\infty}n^{-1}|\phi_n|_2^2$.
\end{cor}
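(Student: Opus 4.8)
The plan is to feed the martingale–coboundary decomposition of Proposition~\ref{prop:mart} into the classical central limit theorem and functional central limit theorem for stationary ergodic martingale difference sequences. Since $\phi\in L^2(X)$ and \eqref{eq:mart} holds with $p=2$, Proposition~\ref{prop:mart} gives $\phi=m+\chi\circ T-\chi$ with $m,\chi\in L^2(X)$, where $m$ is $\cF_0$-measurable and $\E[m\mid T^{-1}\cF_0]=0$. Telescoping the Birkhoff sums yields $\phi_n=M_n+\chi\circ T^n-\chi$ with $M_n=\sum_{j=0}^{n-1}m\circ T^j$, so the whole statement reduces to (i) a limit theorem for $M_n$ and (ii) negligibility of the coboundary term $\chi\circ T^n-\chi$.

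For (i), I would first record the martingale structure. Using that $T$ is invertible and measure-preserving with $T^{-1}\cF_0\subseteq\cF_0$, together with the transfer identity $\E[f\circ T^k\mid\cA]=\E[f\mid T^k\cA]\circ T^k$, one checks that $d_i:=m\circ T^{-i}$ ($i\ge0$) is a stationary sequence of martingale differences for the increasing filtration $(T^i\cF_0)_{i\ge0}$: $d_i$ is $T^i\cF_0$-measurable and $\E[d_i\mid T^{i-1}\cF_0]=\E[m\mid T^{-1}\cF_0]\circ T^{-i}=0$. Ergodicity of $T$ makes $(d_i)$ ergodic, and $\E[d_0^2]=\int_X m^2\,d\mu=:\sigma^2<\infty$. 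Since $T$ preserves $\mu$, applying $T^{-(n-1)}$ shows that for each fixed $n$ the partial-sum process $(M_k)_{0\le k\le n}$ has the same law as a time-reversal of the partial-sum process of $(d_i)$; in particular $M_n=_d\sum_{i=0}^{n-1}d_i$.

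Now I would invoke the martingale CLT and the martingale invariance principle (Billingsley, Brown, Hall--Heyde) for the stationary ergodic square-integrable sequence $(d_i)$. The conditional-variance hypothesis $n^{-1}\sum_{i=0}^{[nt]-1}\E[d_i^2\mid T^{i-1}\cF_0]\to\sigma^2 t$ in probability follows from Birkhoff's ergodic theorem applied to the stationary sequence of conditional variances (which has mean $\sigma^2$), and the Lindeberg condition $n^{-1}\sum_{i=0}^{n-1}\E[d_i^2\mathbf 1_{\{d_i^2>\eps n\}}]=\E[m^2\mathbf 1_{\{m^2>\eps n\}}]\to0$ follows from dominated convergence since $m\in L^2$. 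Hence $n^{-1/2}\sum_{i=0}^{n-1}d_i\to_d N(0,\sigma^2)$ and, via the reindexing above (and the fact that a time-reversed Brownian motion is again a Brownian motion), the interpolated process built from $n^{-1/2}M_{[nt]}$ converges weakly in $C[0,1]$ to Brownian motion of variance $\sigma^2$. For (ii), $\chi\in L^2$ gives $\sum_k\mu(|\chi|>\eps\sqrt k)\le\eps^{-2}\E[\chi^2]<\infty$, so by Borel--Cantelli $\max_{0\le k\le n}|\chi\circ T^k|=o(\sqrt n)$ a.s., whence $n^{-1/2}\sup_{0\le t\le1}|\chi\circ T^{[nt]}-\chi|\to0$ a.s. Combining with (i) via Slutsky's theorem (for the CLT) and an addition argument in $C[0,1]$ (for the WIP) gives both laws with the stated $\sigma^2$; and since $|\phi_n-M_n|_2\le 2|\chi|_2$ is bounded while $|M_n|_2^2=n\sigma^2$ by orthogonality of the stationary martingale differences, $n^{-1}|\phi_n|_2^2\to\sigma^2$, identifying $\sigma^2=\lim_n n^{-1}|\phi_n|_2^2$.

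The one place that needs genuine care, rather than routine invocation, is the orientation of the filtration: the Birkhoff sum involves $m,m\circ T,\dots,m\circ T^{n-1}$, which runs the \emph{wrong} way relative to the filtration $(T^n\cF_0)$, so one must either reverse time as above (exploiting measure-invariance) or pass to the natural extension before quoting the standard forward martingale CLT/WIP. Everything else is standard bookkeeping: the martingale limit theorems are classical, and the coboundary is controlled by the elementary maximal estimate, which uses only $\chi\in L^2$.
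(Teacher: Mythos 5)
Your proof is correct and is precisely the ``standard application of martingale limit theorems'' that the paper invokes with a bare citation to Gordin: feed Proposition~\ref{prop:mart} into the martingale FCLT after time-reversing to fix the filtration orientation, then kill the coboundary by Borel--Cantelli. The time-reversal step you flag as the delicate point is exactly how the paper handles the analogous issue in Theorem~\ref{thm:CLT} (``Passing from reverse time to forward time is standard''), so your argument matches the paper's intended route.
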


\begin{proof}  This is a 
standard application of martingale limit theorems~\cite{Gordin69}.
\end{proof}

Somewhat surprisingly, by the results of~\cite{DedeckerRio00}, if $\phi\in L^\infty(X)$ and conditions~\eqref{eq:mart} hold for $p=1$, then automatically $m\in L^2(X)$ even though Proposition~\ref{prop:mart} only gives $m,\chi\in L^1(X)$.
This suffices for the CLT.
Related references for this phenomenon whereby $m$ has extra regularity include~\cite{KipnisVaradhan86,Liverani96,MaxwellWoodroofe00,PeligradUtev05,TyranKaminska05,Volny07}.
In particular, the following result holds:

\begin{thm} \label{thm:CLT}
Assume that $\phi\in L^\infty(X)$ and conditions~\eqref{eq:mart} hold with $p=1$.  Then the CLT and WIP hold.
\end{thm}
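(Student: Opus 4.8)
The plan is to combine Proposition~\ref{prop:mart} with the $L^1 \to L^2$ improvement of~\cite{DedeckerRio00} and then invoke the martingale CLT/WIP. First I would apply Proposition~\ref{prop:mart} with $p=1$: since $\phi\in L^\infty(X)\subseteq L^1(X)$ and~\eqref{eq:mart} holds with $p=1$, we obtain a martingale-coboundary decomposition $\phi=m+\chi\circ T-\chi$ with $m,\chi\in L^1(X)$, where $m$ is $\cF_0$-measurable and $\E[m|T^{-1}\cF_0]=0$, so that $\{m\circ T^{-n}:n\in\Z\}$ is a stationary sequence of martingale differences with respect to the filtration $\{T^n\cF_0:n\in\Z\}$.

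Next I would upgrade the integrability of $m$. The key input is the Dedecker--Rio criterion~\cite{DedeckerRio00}: for a bounded observable, the convergence of $\sum_{n\ge0}\big|\E[\phi_n\cdot\phi]\big|$ (equivalently, a condition of the type $\E[\phi_n\,|\,\cF_0]$ controlled in the appropriate sense, which follows from~\eqref{eq:mart} with $p=1$ together with $\phi\in L^\infty$) forces $m\in L^2(X)$ and moreover $n^{-1}|\phi_n|_2^2\to\int_X m^2\,d\mu=:\sigma^2$. The mechanism is that the partial sums $\phi_n$ and $m_n=\sum_{j=0}^{n-1}m\circ T^j$ differ by the telescoping coboundary term $\chi\circ T^n-\chi$, which is $o(n^{1/2})$ in an $L^2$-averaged sense once one knows $m\in L^2$; boundedness of $\phi$ is what lets one bootstrap from the $L^1$ bound on $\chi$ to the $L^2$ control needed to close the argument. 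I would cite~\cite{DedeckerRio00} (and the related references~\cite{Liverani96,MaxwellWoodroofe00,PeligradUtev05,Volny07}) for this step rather than reprove it.

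Finally, with $m\in L^2(X)$ a stationary ergodic martingale difference sequence, the Billingsley--Ibragimov martingale CLT gives $n^{-1/2}m_n\to_d N(0,\sigma^2)$, and the martingale functional central limit theorem (invariance principle) gives $W_n^{(m)}\to_w W$ in $C[0,1]$, where $W$ is Brownian motion with variance $\sigma^2$; here $\sigma^2=\int_X m^2\,d\mu$ by the orthogonality of the martingale increments. To pass from $m_n$ to $\phi_n$, observe that $\phi_n-m_n=\chi\circ T^n-\chi$ with $\chi\in L^1(X)$. By the ergodic theorem (or Borel--Cantelli), $n^{-1/2}\max_{0\le k\le n}|\chi\circ T^k-\chi|\to0$ in probability, so the coboundary is asymptotically negligible at scale $n^{1/2}$ uniformly in the time parameter; this is exactly what is needed to transfer both the CLT and the WIP (a Slutsky-type argument in $C[0,1]$) from $m$ to $\phi$. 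Hence the CLT and WIP hold for $\phi$ with the same $\sigma^2$.

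The main obstacle is the second step: extracting $m\in L^2$ from hypotheses stated only at the $L^1$ level. Proposition~\ref{prop:mart} genuinely yields only $m,\chi\in L^1$, and the improvement to $L^2$ is the nontrivial and somewhat surprising ingredient; it relies essentially on $\phi\in L^\infty$ and on the precise form of~\eqref{eq:mart} with $p=1$, via the Dedecker--Rio machinery. Once $m\in L^2$ is in hand, the remaining steps (martingale limit theorems plus negligibility of an $L^1$ coboundary at scale $n^{1/2}$) are standard.
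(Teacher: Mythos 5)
Your overall strategy (Proposition~\ref{prop:mart}, then the Dedecker--Rio $L^1\to L^2$ upgrade for $m$, then martingale limit theorems plus coboundary negligibility) is close in spirit to the paper's route, but it has a genuine gap at the final step, and it is not the argument the paper uses. The paper proves the theorem by citing \cite[Corollary~4]{DedeckerRio00} \emph{directly}, which yields the functional CLT (WIP) in reverse time, and then reverses time using the standard argument of \cite[Section~4.2]{KM16}. It does not pass through a Slutsky argument with an $L^1$ coboundary, and for a good reason.

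The gap is your claim that $n^{-1/2}\max_{0\le k\le n}|\chi\circ T^k-\chi|\to0$ in probability ``by the ergodic theorem (or Borel--Cantelli).'' Neither tool gives this for $\chi\in L^1(X)$ only. The ergodic theorem controls Birkhoff averages (hence $n^{-1}\chi\circ T^n\to0$ a.s.), not maxima at scale $n^{1/2}$. Borel--Cantelli would require $\sum_n\mu(|\chi|>\ve n^{1/2})<\infty$, which is essentially equivalent to $\chi\in L^2$. Even the weaker in-probability statement fails in general: the natural union bound gives $\mu\bigl(\max_{k\le n}|\chi\circ T^k|>\ve n^{1/2}\bigr)\le (n+1)\,\mu(|\chi|>\ve n^{1/2})$, and for $\chi\in L^1\setminus L^2$ the right-hand side need not tend to $0$ (already in the i.i.d.\ case one can pick tails $\mu(|\chi|>t)\sim 1/(t\log^2 t)$ to see failure). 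This is precisely why the paper's text distinguishes ``this suffices for the CLT'' (fixed time, where one only needs $n^{-1/2}(\chi\circ T^n-\chi)\to0$ in probability, which does hold for $\chi\in L^1$) from the WIP, for which it invokes Dedecker--Rio's functional statement directly rather than a coboundary/Slutsky argument. Note also that Dedecker--Rio's improvement makes $m\in L^2$ but gives no improvement on $\chi$; the coboundary remains merely $L^1$.

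Two smaller issues. First, $\{m\circ T^{-n}\}$ is a martingale difference sequence with respect to the increasing filtration $\{T^n\cF_0\}$; the forward-time sums $m_n=\sum_{j=0}^{n-1}m\circ T^j$ are \emph{reverse} martingale sums, so the Billingsley--Ibragimov theorems and the martingale FCLT apply to $\sum_{j=1}^n m\circ T^{-j}$, and one must then pass to forward time (this is the reversal step the paper flags explicitly and handles via \cite[Section~4.2]{KM16}). Second, Theorem~\ref{thm:CLT} as stated does not assert the identification $\sigma^2=\int_X m^2\,d\mu=\lim n^{-1}|\phi_n|_2^2$; that identification is made in Corollary~\ref{cor:mart} under the stronger hypothesis $p=2$, so you should not claim it under $p=1$ without further justification.
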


\begin{proof}
The CLT and WIP in reverse time (as $n\to-\infty$) is an immediate consequence 
of~\cite[Corollary~4]{DedeckerRio00}.
Passing from reverse time to forward time is standard
(see for example~\cite[Section~4.2]{KM16}).
\end{proof}

Now let $\phi$ be vector-valued with values in $\R^k$.
Define c\`adl\`ag processes $W_n$ in $\R^k$ and $\BBW_n\in\R^{k\times k}$:
\[
W_n(t)=n^{-1/2}\sum_{0\le j<nt}\phi\circ T^j, \qquad
\BBW_n^{\beta\gamma}(t)=n^{-1}\sum_{0\le i<j<nt}\phi^\beta \circ T^i \phi^\gamma \circ T^j.
\]

\begin{prop}[Iterated WIP] \label{prop:iwip}
Suppose that $T$ is mixing.
Assume that $\phi\in L^2(X,\R^k)$ and conditions~\eqref{eq:mart} hold with $p=2$.  Then 
\begin{itemize}
\item[(i)] The series
\(
\Sigma^{\beta\gamma}  = \sum_{n=-\infty}^\infty \int_X \phi^\beta\,\phi^\gamma\circ T^n\,d\mu, \,
E^{\beta\gamma}  = \sum_{n=1}^\infty \int_X\phi^\beta\,\phi^\gamma\circ T^n\,d\mu,
\)
converge.
\item[(ii)] 
$(W_n,\BBW_n)\to_w (W,\BBW)$,
where $W$ is a $k$-dimensional Brownian motion with covariance matrix $\Sigma$ and $\BBW^{\beta\gamma}(t)=\int_0^t W^\beta\,dW^\gamma+E^{\beta\gamma}t$.
\end{itemize}
\end{prop}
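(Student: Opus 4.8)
The plan is to bootstrap from the martingale-coboundary decomposition supplied by Proposition \ref{prop:mart}, which under the hypotheses gives $\phi = m + \chi\circ T - \chi$ with $m,\chi\in L^2(X,\R^k)$ and $\{m\circ T^{-n}\}$ a stationary sequence of $L^2$ martingale differences with respect to the filtration $\{T^n\cF_0\}$. The strategy is: (1) prove the iterated WIP for the martingale part $m$ using a martingale functional central limit theorem; (2) show that replacing $\phi$ by $m$ perturbs $W_n$ and $\BBW_n$ only by terms that are negligible in the appropriate sense; (3) identify the limiting covariance $\Sigma$ and the correction term $E$ by matching against the closed forms for martingales and tracking the coboundary corrections. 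Part (i), convergence of the series, is essentially contained in the decomposition: writing $\int_X\phi^\beta\,\phi^\gamma\circ T^n\,d\mu$ in terms of $m$ and $\chi$ and using $|\E[\phi\circ T^n|\cF_0] - \phi\circ T^n|_2$ summability (condition \eqref{eq:mart}) together with the martingale-difference orthogonality gives absolute convergence of both series and, in fact, the identities $\Sigma = \int_X m\otimes m\,d\mu$ and a formula for $E$ in terms of $m$ and $\chi$; I would record this first as it is used in step (3).

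For step (1), let $M_n(t) = n^{-1/2}\sum_{0\le j<nt} m\circ T^j$ and $\mathbb{M}_n^{\beta\gamma}(t) = n^{-1}\sum_{0\le i<j<nt} m^\beta\circ T^i\,m^\gamma\circ T^j$. Reversing time so that the martingale increments are adapted going forward, the hypothesis that $T$ is mixing (hence $\{m\circ T^{-j}\}$ is ergodic) together with stationarity gives, by the martingale CLT, that $n^{-1}\sum_{0\le j<nt}(m\otimes m)\circ T^j \to t\,\Sigma$ a.s., which is exactly the conditional-variance condition needed for the multidimensional martingale functional CLT; the Lindeberg condition is automatic since the increments are stationary and $L^2$. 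This yields $M_n\to_w W$ with covariance $\Sigma$. The iterated functional CLT for the double sum $\mathbb{M}_n$ then follows — for a martingale, $\mathbb{M}_n$ is itself (close to) a martingale transform of $M_n$ against itself, so the joint convergence $(M_n,\mathbb{M}_n)\to_w(W,\mathbb{W}^{\mathrm{It\hat{o}}})$ with $\mathbb{W}^{\mathrm{It\hat{o}},\beta\gamma}(t) = \int_0^t W^\beta\,dW^\gamma$ is the standard iterated-integral limit theorem for martingales (this is the clean case of the rough-path / Kurtz--Protter type result, with no area correction because $m$ is a martingale difference). I would cite the relevant martingale iterated-WIP statement here rather than reproving it.

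Step (2) is where the coboundary is absorbed and, I expect, the main technical obstacle — not because anything deep happens, but because one must control the telescoping terms uniformly in $t$. Since $\phi\circ T^j = m\circ T^j + \chi\circ T^{j+1} - \chi\circ T^j$, summing telescopes: $W_n(t) = M_n(t) + n^{-1/2}(\chi\circ T^{\lceil nt\rceil} - \chi)$. The correction is $o(1)$ in probability uniformly in $t$ because $\chi\in L^2$ forces $n^{-1/2}\max_{0\le k\le n}|\chi\circ T^k|\to 0$ in probability (a Borel--Cantelli / maximal argument using stationarity and $L^2$-integrability). For the double sum, expanding $\sum_{0\le i<j<nt}\phi^\beta\circ T^i\,\phi^\gamma\circ T^j$ in terms of $m$ and $\chi$ produces $\mathbb{M}_n$ plus several mixed sums; each mixed sum either telescopes in one index (leaving a single sum against $\chi$, handled as for $W_n$ after another maximal estimate, with care that cross terms like $n^{-1}\sum_j m^\beta\circ T^i \chi^\gamma\circ T^j$ telescope to boundary contributions of the right order) or produces a Birkhoff average $n^{-1}\sum_{0\le j<nt} (\chi^\beta\,m^\gamma)\circ T^j$ type term. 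The latter converges, by the ergodic theorem, to $t\int_X \chi^\beta m^\gamma\,d\mu$ (or the analogous covariance), and it is precisely the accumulation of these ergodic-average limits that produces the drift term $E^{\beta\gamma}t$ in $\mathbb{W}^{\beta\gamma}$. I would organize this bookkeeping as a lemma: the difference $\mathbb{W}_n - \mathbb{M}_n$ converges in probability, uniformly on $[0,1]$, to the deterministic path $t\mapsto (E^{\beta\gamma} - E^{\gamma\beta})$ or whatever constant matrix the expansion dictates, and then match this against the identity for $E$ from step (i) to confirm $\mathbb{W}^{\beta\gamma}(t) = \int_0^t W^\beta\,dW^\gamma + E^{\beta\gamma}t$. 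Combining steps (1)--(3) via Slutsky and the continuous mapping theorem gives $(W_n,\mathbb{W}_n)\to_w(W,\mathbb{W})$, completing the proof.
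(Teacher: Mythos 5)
Your proposal takes essentially the same route as the paper, which simply applies Proposition~\ref{prop:mart} to obtain the $L^2$ martingale-coboundary decomposition and then invokes~\cite[Theorem~4.3]{KM16} (Kelly--Melbourne) as a black box. What you have written out is, in effect, a sketch of the internals of that theorem: iterated WIP for the martingale part, plus bookkeeping to absorb the coboundary and produce the drift $E^{\beta\gamma}t$. So the strategies coincide; the paper just cites the result rather than reproving it.

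One subtlety worth flagging in your sketch: since $m\circ T^j$ is $T^{-j}\cF_0$-measurable and $T^{-j}\cF_0$ is \emph{decreasing} in $j$, the increments $\{m\circ T^j\}_{j\ge 0}$ in the forward Birkhoff sum form a \emph{reverse} martingale difference sequence. You note the need to ``reverse time so that the increments are adapted going forward,'' but time-reversal does not commute transparently with the iterated process: the iterated-integral limit one obtains directly for reverse martingale differences is the \emph{backward} It\^o integral $\int_0^t dW^\beta\,W^\gamma$, not $\int_0^t W^\beta\,dW^\gamma$, and the discrepancy is precisely $\Sigma^{\beta\gamma}t$. Tracking this sign correctly is where the stated formula $\BBW^{\beta\gamma}(t)=\int_0^t W^\beta\,dW^\gamma+E^{\beta\gamma}t$ (with $E$ a one-sided sum, not a symmetrised one) actually comes from; it is handled carefully in~\cite{KM16} and is the main place where an attempt to reprove the result ``from scratch'' can go wrong. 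Since you ultimately say you would cite the martingale iterated-WIP theorem rather than rederive it, this does not invalidate your outline, but it should be made explicit in any write-up.
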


\begin{proof}
By Proposition~\ref{prop:mart}, $\phi$ admits a martingale-coboundary decomposition with $m$, $\chi\in L^2(X,\R^k)$, so the result holds
by~\cite[Theorem~4.3]{KM16}.
\end{proof}

\subsection*{Moments}

For optimal moment estimates, the following projective version of conditions~\eqref{eq:mart} are better suited:
\begin{equation} \label{eq:proj}
 \SMALL  
 \sum_{n\ge 1} n^{-1/2}|\E[\phi\circ T^{-n}|\cF_0]|_p<\infty, \qquad
\sum_{n\ge0} n^{-1/2}\big|\E[\phi\circ T^n|\cF_0]-\phi\circ T^n\big|_p<\infty.
\end{equation}

\begin{prop} \label{prop:mom} 
Assume $\phi\in L^p(X)$ and conditions~\eqref{eq:proj} hold for some $p>2$.
Then
$\big|\max_{k\le n}|\phi_k|\big|_p=O(n^{1/2})$.

If in addition $n^{-1/2}\phi_n\to_d Y$ for some $L^p$ random variable $Y$,
then
$\lim_{n\to\infty}n^{-q/2}|\phi_n|_q^q=\E|Y|^q$ 
for all $q<p$.
\end{prop}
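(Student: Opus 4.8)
The plan is to treat the maximal moment bound $\big|\max_{k\le n}|\phi_k|\big|_p=O(n^{1/2})$ and the convergence of normalized moments separately, the former being the substantial point. First I would reduce the maximal bound to the non-maximal bound $|\phi_n|_p=O(n^{1/2})$ by a dyadic splitting: if $2^{R-1}<n\le 2^R$ then for every $k\le n$ the Birkhoff sum $\phi_k$ is a sum of at most $R+1$ ``dyadic blocks'' $\phi_{2^\ell}\circ T^{i2^\ell}$ with $0\le\ell\le R$ and $0\le i\le 2^{R-\ell}$, so $\max_{k\le n}|\phi_k|\le\sum_{\ell=0}^R\max_{0\le i\le 2^{R-\ell}}|\phi_{2^\ell}\circ T^{i2^\ell}|$ pointwise; bounding each inner maximum in $L^p$ by the corresponding $\ell^p$-sum and using that $T$ preserves $\mu$ yields $\big|\max_{k\le n}|\phi_k|\big|_p\lesssim\sum_{\ell=0}^R 2^{(R-\ell)/p}|\phi_{2^\ell}|_p$, and once $|\phi_m|_p=O(m^{1/2})$ is available this sum is geometrically dominated by its top term---this is precisely where the hypothesis $p>2$ enters, through $1/2-1/p>0$---and so equals $O(2^{R/2})=O(n^{1/2})$. (If one instead quotes a genuinely maximal $L^p$ inequality, this reduction can be skipped.)

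The remaining task, $|\phi_n|_p=O(n^{1/2})$ under~\eqref{eq:proj}, is where I expect the real work to lie. Proposition~\ref{prop:mart} is not directly available: conditions~\eqref{eq:proj} are strictly weaker than~\eqref{eq:mart}, the Gordin series defining $\chi$ need not converge in $L^p$, and truncating it at $N$ terms produces correctors whose $L^p$-norm may grow like $N^{1/2}$, which ruins any naive Burkholder estimate for the associated martingale. One must instead use a martingale approximation specifically adapted to the projective hypotheses, of Maxwell--Woodroofe / Peligrad--Utev type, and---since $\phi$ is only asymptotically $\cF_0$-measurable in forward time---in a two-sided, reverse-adapted form. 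Concretely, writing $g_n=\E[\phi\circ T^n|\cF_0]$, one checks by interchanging the order of summation that the first series in~\eqref{eq:proj} implies $\sum_{j\ge1}j^{-3/2}\big|\E[\sum_{i<j}\phi\circ T^{-i}|\cF_0]\big|_p<\infty$ for the time-reversed partial sums, while the second series controls the defect $|\phi\circ T^n-g_n|_p$; such conditions yield a decomposition $\sum_{i<n}\phi\circ T^{-i}=M_n+R_n$ in which $\{M_n\}$ is a (reverse) martingale with \emph{stationary} $L^p$ increments and $|R_n|_p=o(n^{1/2})$, so that $|M_n|_p\le C_p n^{1/2}$ by the Burkholder--Davis--Gundy inequality together with Minkowski's inequality in $L^{p/2}$ (legitimate since $p/2>1$), and hence $|\phi_n|_p=\big|\sum_{i<n}\phi\circ T^{-i}\big|_p=O(n^{1/2})$ by measure preservation. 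The effort is concentrated in this step---locating in the martingale literature, or reproving by a summation-by-parts and dyadic induction, the $L^p$-martingale approximation valid under the weak projective condition~\eqref{eq:proj}; the rest is routine.

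Finally, for the convergence of moments, fix $q<p$. The bound $|\phi_n|_p=O(n^{1/2})$ shows that $\big\{\,\big|n^{-1/2}\phi_n\big|^q:n\ge1\,\big\}$ is bounded in $L^{p/q}$, and since $p/q>1$ it is uniformly integrable. The hypothesis $n^{-1/2}\phi_n\to_d Y$ gives $\big|n^{-1/2}\phi_n\big|^q\to_d|Y|^q$, and combining uniform integrability with convergence in distribution yields convergence of the expectations: $n^{-q/2}|\phi_n|_q^q=\E\big|n^{-1/2}\phi_n\big|^q\to\E|Y|^q$, which is the claim.
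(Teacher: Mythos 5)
Your overall architecture is sound: reduce the maximal bound to the non-maximal bound by a dyadic decomposition, prove the latter via a Maxwell--Woodroofe/Peligrad--Utev type martingale approximation, and deduce convergence of $q$-th moments from uniform integrability. The dyadic reduction is correct (the $\ell^p$ bound on the inner maximum plus $1/2-1/p>0$ makes the geometric sum close exactly as you say), and your uniform-integrability argument for the second statement is fine and is essentially what the paper's cited \cite[Lemma~2.1(e)]{MTorok12} does.

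The gap is in the step you yourself flag as ``where the real work lies.'' You assert a decomposition $\sum_{i<n}\phi\circ T^{-i}=M_n+R_n$ with $M_n$ a reverse martingale with \emph{stationary} $L^p$ increments and $|R_n|_p=o(n^{1/2})$, under the projective hypothesis derived from~\eqref{eq:proj}, and then refer to ``the martingale literature, or reproving by a summation-by-parts and dyadic induction.'' That decomposition is precisely the non-trivial lemma, and it is not a single off-the-shelf statement you can point to for general $p>2$ in the non-adapted setting you describe; supplying it would require genuine work of the same order as the proposition itself. The paper sidesteps this entirely: it verifies the two dyadic summability conditions of \cite[Corollary~3.9]{DedeckerMerlevedePene13} for the reversed sums $A_n=\sum_{j=1}^n\phi\circ T^{-j}$ (a short interchange-of-summation computation from~\eqref{eq:proj}) and invokes that corollary, which already yields a maximal inequality $\big|\max_{k\le 2^r}|A_k|\big|_p\ll 2^{r/2}$; consequently the paper needs neither a stationary-increment martingale nor your dyadic reduction from $\max_{k\le n}|\phi_k|$ to $|\phi_n|$. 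So while your plan is a plausible route, its central lemma is unproven and is not replaced by a concrete citation, which is a genuine gap.
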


\begin{proof}
Let $A_n=\sum_{j=1}^n \phi\circ T^{-j}$.
Then\footnote{We use the notation $A \ll B$ to denote $A \le \mbox{const.} B$, where the constant is independent
of the other parameters present.} for $r\ge1$,
\begin{align*}
\sum_{k=0}^{r-1}2^{-k/2} & |\E(A_{2^k}|\cF_0)|_p
\le \sum_{k=0}^{r-1}2^{-k/2}\sum_{j=1}^{2^k}|\E(\phi\circ T^{-j}|\cF_0)|_p
\\ &=\sum_{j=1}^{2^{r-1}}
\sum_{k=\lceil \log_2j \rceil}^{r-1}2^{-k/2}|\E(\phi\circ T^{-j}|\cF_0)|_p
  \ll \sum_{j=1}^{2^{r-1}}
j^{-1/2}|\E(\phi\circ T^{-j}|\cF_0)|_p.
\end{align*}
By condition~\eqref{eq:proj}, $\sum_{k=0}^\infty 2^{-k/2}|\E(A_{2^k}|\cF_0)|_p<\infty$.
Similarly, 
$\sum_{k=1}^{\infty}2^{-k/2}|A_{2^k}-\E(A_{2^k}|T^{2^k}\cF_0)|_p<\infty$.
Recalling that $T^{-1}\cF_0\subseteq\cF_0$, it follows from~\cite[Corollary~3.9]{DedeckerMerlevedePene13}
that $\big|\max_{k\le 2^r}|A_k|\big|_p \ll 2^{r/2}$.

For general $n\ge1$ choose $r\ge1$ so that $2^{r-1}<n\le 2^r$.
Then
\[
\big|\max_{k\le n}|A_k|\big|_p\le
\big|\max_{k\le 2^r}|A_k|\big|_p\ll 
2^{r/2}\le (2n)^{1/2}.
\]
Finally,
$\phi_k=(A_n-A_{n-k})\circ T^n$
so
\[
\big|\max_{k\le n}|\phi_k|\big|_p=
\big|\max_{k\le n}|A_n-A_{n-k}|\big|_p
\le
2\big|\max_{k\le n}|A_k|\big|_p\ll n^{1/2},
\]
proving the first statement.

The second statement is an immediate consequence of the first, see for example~\cite[Lemma~2.1(e)]{MTorok12}.
\end{proof}

Now let $\phi$ be vector-valued with values in $\R^k$ and
define $S_n^{\beta\gamma}=\sum_{0\le i<j<n}\phi^\beta\circ T^i\,\phi^\gamma\circ T^j$.

\begin{prop} \label{prop:imom}
Assume that $\phi\in L^p(X,\R^k)$ and conditions~\eqref{eq:mart} hold for some $p\ge4$.  Then 
$\big|\max_{k\le n}|S_k^{\beta\gamma}|\big|_{p/2}=O(n)$.
\end{prop}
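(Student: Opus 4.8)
The plan is to feed the martingale--coboundary decomposition of Proposition~\ref{prop:mart} into the Burkholder--Davis--Gundy (BDG) inequality, and to reduce the supremum over $k\le n$ to partial sums composed with a single iterate $T^n$, in the style of the proof of Proposition~\ref{prop:mom}. Since $p\ge4$, Proposition~\ref{prop:mart} gives $\phi=m+\chi\circ T-\chi$ with $m,\chi\in L^p(X,\R^k)$, where $m$ is $\cF_0$-measurable and $\E[m|T^{-1}\cF_0]=0$. Composing with negative iterates, one checks that $\{m\circ T^{-j}\}_{j\ge1}$ is a martingale difference sequence for the \emph{increasing} filtration $\{T^j\cF_0\}_{j\ge1}$, so the backward sums $M^\beta_m:=\sum_{j=1}^m m^\beta\circ T^{-j}$ are martingales; moreover $A^\beta_m:=\sum_{j=1}^m\phi^\beta\circ T^{-j}=M^\beta_m+\chi^\beta-\chi^\beta\circ T^{-m}$. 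BDG and the triangle inequality in $L^{p/2}$ give $\big|\max_{m\le n}|M^\beta_m|\big|_p\ll\big|\big(\sum_{j=1}^n|m^\beta\circ T^{-j}|^2\big)^{1/2}\big|_p=O(n^{1/2})$, and since $\big|\max_{m\le n}|\chi^\beta\circ T^{-m}|\big|_p=O(n^{1/p})=O(n^{1/2})$, also $\big|\max_{m\le n}|A^\beta_m|\big|_p=O(n^{1/2})$ (this estimate is essentially contained in the proof of Proposition~\ref{prop:mom}).

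Next, introduce the backward iterated sums $Q^{\beta\gamma}_m:=\sum_{1\le i<j\le m}\phi^\gamma\circ T^{-i}\,\phi^\beta\circ T^{-j}$. Splitting $\{1\le i<j\le n\}$ according to whether $i,j\le n-k$ and relabelling $i\mapsto n-j$, $j\mapsto n-i$ yields the identity
\[
S^{\beta\gamma}_k=\big(Q^{\beta\gamma}_n-Q^{\beta\gamma}_{n-k}-A^\gamma_{n-k}(A^\beta_n-A^\beta_{n-k})\big)\circ T^n,\qquad k\le n.
\]
Composition with $T^n$ preserves $L^{p/2}$ norms; and by the pointwise bound $\max_{k\le n}|A^\gamma_{n-k}(A^\beta_n-A^\beta_{n-k})|\le 2(\max_{m\le n}|A^\gamma_m|)(\max_{m\le n}|A^\beta_m|)$, Hölder with $\tfrac{1}{p/2}=\tfrac1p+\tfrac1p$, and the previous paragraph, $\big|\max_{k\le n}|A^\gamma_{n-k}(A^\beta_n-A^\beta_{n-k})|\big|_{p/2}=O(n)$. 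So it remains to prove $\big|\max_{m\le n}|Q^{\beta\gamma}_m|\big|_{p/2}=O(n)$.

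For this I would substitute $\phi\circ T^{-j}=m\circ T^{-j}+(\chi\circ T^{-(j-1)}-\chi\circ T^{-j})$ into $Q^{\beta\gamma}_m$ and expand. The main term $\bar Q^{\beta\gamma}_m:=\sum_{1\le i<j\le m}m^\gamma\circ T^{-i}\,m^\beta\circ T^{-j}=\sum_{j=2}^m M^\gamma_{j-1}\,m^\beta\circ T^{-j}$ is a martingale in $m$ for $\{T^m\cF_0\}$, since $M^\gamma_{j-1}$ is $T^{j-1}\cF_0$-measurable while $\E[m^\beta\circ T^{-j}|T^{j-1}\cF_0]=0$; thus, by BDG and then Hölder,
\[
\big|\max_{m\le n}|\bar Q^{\beta\gamma}_m|\big|_{p/2}\ll\Big|\big(\max_{m\le n}|M^\gamma_m|\big)\Big(\sum_{j=1}^n|m^\beta\circ T^{-j}|^2\Big)^{1/2}\Big|_{p/2}\le\big|\max_{m\le n}|M^\gamma_m|\big|_p\,\Big|\Big(\sum_{j=1}^n|m^\beta\circ T^{-j}|^2\Big)^{1/2}\Big|_p=O(n^{1/2})\cdot O(n^{1/2}),
\]
so $\big|\max_{m\le n}|\bar Q^{\beta\gamma}_m|\big|_{p/2}=O(n)$. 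Every remaining term carries at least one coboundary factor; using the telescoping identity $\sum_{i=1}^{j-1}(\chi\circ T^{-(i-1)}-\chi\circ T^{-i})=\chi-\chi\circ T^{-(j-1)}$ and its analogue in the other index, these collapse into finitely many expressions of two types: backward Birkhoff sums $\sum_{l=1}^m h\circ T^{-l}$ with $h$ a product of two $L^p$ functions (hence $h\in L^{p/2}$), and products of a single bounded factor ($\chi$, or $\chi\circ T^{-m}$) against a backward partial sum of $\phi$, $m$, or $\chi$. Bounding $\max_{m\le n}$ crudely by the sum of $L^{p/2}$ norms of the increments, and using $\big|\max_{m\le n}|\chi^\beta\circ T^{-m}|\big|_p=O(n^{1/p})$ together with Hölder, each such term is $O(n)$ in $L^{p/2}$. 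Combining everything gives $\big|\max_{k\le n}|S^{\beta\gamma}_k|\big|_{p/2}=O(n)$.

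The step I expect to be the main obstacle is the martingale argument for $\bar Q^{\beta\gamma}_m$: one must identify the correct \emph{increasing} filtration $\{T^m\cF_0\}$, which appears only after passing to negative iterates (so that the ``$i<j$'' factor is predictable while the ``$j$'' factor is a genuine martingale increment), and then extract from BDG the product bound $\big(\max_{m\le n}|M^\gamma_m|\big)\big(\sum_j|m^\beta\circ T^{-j}|^2\big)^{1/2}$ rather than a wasteful term-by-term estimate, noting that each factor costs only $n^{1/2}$. The many coboundary error terms are bookkeeping rather than substance, but one has to track carefully which direction of conditioning each bound uses.
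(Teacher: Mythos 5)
Your proof is correct and follows essentially the same route as the paper's. Both arguments hinge on the same three ingredients: (1) the time-reversal identity expressing the running maximum of $S_k^{\beta\gamma}$ in terms of backward partial sums composed with $T^n$, so that the relevant filtration $\{T^m\cF_0\}$ is increasing; (2) the observation -- crucial in light of Remark~\ref{rmk:error} -- that the predictable ``$i<j$'' factor must be built from $m$ rather than $\phi$ (the paper achieves this by decomposing $\phi^\gamma$ inside the $m^\beta$-term to get $I_n$, you achieve it by decomposing both factors of $Q_m^{\beta\gamma}$ to isolate $\bar Q_m^{\beta\gamma}$), after which Doob/Burkholder (the paper) or BDG (you) applies; and (3) crude term-by-term bounds on the coboundary error terms. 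The minor organizational differences are that the paper splits $S_n^{\beta\gamma}=I_n+J_n$ \emph{before} reversing and reverses only the martingale part $I_n$, whereas you reverse everything first and then decompose $Q_m$ (generating a few more error terms, all harmless); and in the square-function step the paper uses the $L^{p/4}$ triangle inequality ($|\sum X_i^2|_{p/4}\le\sum|X_i^2|_{p/4}$, where $p\ge4$ enters) and $|m^{\gamma,-}_{i-1}|_p\ll i^{1/2}$, while you pull out $\max_m|M^\gamma_m|$ pointwise and apply H\"older with exponents $(p,p)$. Both square-function arguments are valid and yield the same $O(n)$ bound. In short: same decomposition strategy, same key lemma, slightly different bookkeeping.
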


\begin{proof}
By Proposition~\ref{prop:mart}, we have a martingale-coboundary decomposition
$\phi=m+\chi\circ T-\chi$ with $m,\,\chi\in L^p(X,\R^k)$.
Write
\begin{align*}
S_n^{\beta\gamma} & =
\sum_{0\le i<j<n}m^\beta\circ T^i\, \phi^\gamma\circ T^j+
\sum_{1\le j<n}(\chi^\beta\circ T^j-\chi^\beta) \phi^\gamma\circ T^j
=I_n+J_n
\end{align*}
where
$I_n=\sum_{0\le i<j<n}m^\beta\circ T^i\, m^\gamma\circ T^j$ and
\[
J_n=
\sum_{0\le i<n-1}m^\beta\circ T^i\, (\chi^\gamma\circ T^n-\chi^\gamma\circ T^{i+1})
+
\sum_{1\le j<n}(\chi^\beta\circ T^j-\chi^\beta) \phi^\gamma\circ T^j.
\]

Now,
\[
\max_{k\le n}|J_k|\le
\sum_{0\le i<n-1}|m^\beta|\circ T^i\, (|\chi^\gamma|\circ T^n+|\chi^\gamma|\circ T^{i+1})
+
\sum_{1\le j<n}(|\chi^\beta|\circ T^j+|\chi^\beta|) |\phi^\gamma|\circ T^j.
\]
Hence
$\big|\max_{k\le n}|J_k|\big|_{p/2}\le 2n\big(|m^\beta|_p|\chi^\gamma|_p
+ |\chi^\beta|_p|\phi^\gamma|_p\big)$.

Next, we recall the identity
\[
I_k=I_n-I_{n-k}\circ T^k-(m^\beta_n-m^\beta_{n-k}\circ T^k)(m^\gamma_{n-k}\circ T^k), \quad 0\le k\le n,
\]
where $m_n^\beta=\sum_{i=0}^{n-1}m^\beta\circ T^i$.
Set 
\[
m_n^{\beta,-}=\sum_{1\le i\le n}m^\beta\circ T^{-i}, \qquad
I_n^-=\sum_{1\le j<i\le n}m^\beta\circ T^{-i}m^\gamma\circ T^{-j}.
\]
Then $m_{n-k}^\beta\circ T^k=m_{n-k}^{\beta,-}\circ T^n$ and
$I_{n-k}\circ T^k=I_{n-k}^-\circ T^n$ for all $k\le n$.
Hence
\[
I_k=\Big(I_n^- -I_{n-k}^- - (m^{\beta,-}_n-m^{\beta,-}_{n-k})m^{\gamma,-}_{n-k}\Big)\circ T^n
\]
and so
\begin{equation} \label{eq:minus}
\big|\max_{k\le n}|I_k|\big|_{p/2}
\le 2\big|\max_{k\le n}|I_k^-|\big|_{p/2}
+ 2 
\big|\max_{k\le n}|m^{\beta,-}_k|\big|_p
\big|\max_{k\le n}|m^{\gamma,-}_k|\big|_p.
\end{equation}

Now
\[
I_k^-= \sum_{i=2}^{k}X_{i}
\quad\text{where}\quad
%X_{i}=\big({\SMALL\sum}_{1\le j<i}m^\beta\circ T^{-i}\big)m^\gamma\circ T^{-j}. 
X_{i}=m^\beta\circ T^{-i}\Big(\sum_{j=1}^{i-1}m^\gamma\circ T^{-j}\Big)
=m^\beta\circ T^{-i}m^{\gamma,-}_{i-1}.
\]
Since $\{m\circ T^{-n};\,n\ge0\}$ is a sequence of $L^p$ martingale differences, 
$\{X_{i};\,i\ge1\}$ is a sequence of $L^{p/2}$ martingale differences.
By the inequalities of Doob and Burkholder~\cite{Burkholder73},
\[
\big|\max_{k\le n}|I_k^-|\big|_{p/2}^2\ll 
|({\SMALL\sum}_{i=1}^nX_{i}^2)^{1/2}|_{p/2}^2
=|{\SMALL\sum}_{i=1}^n X_{i}^2|_{p/4}.
\]
(The implied constant depends only on $p$ and is in particular independent of $n$.) 
Hence, using that $p\ge4$,
\[
\big|\max_{k\le n}|I_k^-|\big|_{p/2}^2\ll 
{\SMALL\sum}_{i=1}^n |X_{i}^2|_{p/4}
= {\SMALL\sum}_{i=1}^n|X_{i}|_{p/2}^2
\le |m^\beta|_p^2 {\SMALL\sum}_{i=1}^n
 |m^{\gamma,-}_{i-1}|_p^2.
\]
Applying Burkholder once more, $\big|\max_{k\le n}|m^{\gamma,-}_k|\big|_p\ll n^{1/2}|m^\gamma|_p\,$; in particular
$\big|\max_{k\le n}|I_k^-|\big|_{p/2}\ll n|m^\beta|_p|m^\gamma|_p$.  
Substituting these estimates into~\eqref{eq:minus}
yields $\big|\max_{k\le n}|I_k|\big|_{p/2}\ll n|m^\beta|_p|m^\gamma|_p$ and
the result follows.
\end{proof}

\begin{rmk} \label{rmk:error}
 There is an error in~\cite[Proposition~7.1]{KM16} due to 
an inaccurate application of a (correct) result of~\cite{MN08}.  (The argument in~\cite{KM16} is fine for nonuniformly expanding maps but false for nonuniformly hyperbolic maps since the observable $\phi$ is not adapted to the filtration for the martingale.)  

This error was repeated in the first version of the current paper and was spotted by the referee.
As pointed out to us by the referee, the reference~\cite{DedeckerMerlevedePene13} can be used for the ordinary moments $\phi_n$ and this argument is now employed in the proof of Proposition~\ref{prop:mom}.
(Indeed, Proposition~\ref{prop:mom} is an improvement on the previous result~\cite[Eq.~(3.1)]{MN08} since it is no longer required that $\phi\in L^\infty(X)$.)  However, it remains an interesting open problem to obtain optimal control of the iterated moments $S_n$.
\end{rmk}

\paragraph{Homogenization}

As shown in~\cite{KM16,KM17}, rough path theory yields 
homogenization of fast-slow systems~\eqref{eq:fs} provided the iterated WIP
and suitable iterated moment estimates hold.
The iterated moment estimates have been relaxed in~\cite{CFKMZ,CFKMZprep}.
We now apply these results to the 
fast-slow system~\eqref{eq:fs}.

 Define the c\`adl\`ag process
$\hat x_\eps$ and the stochastic process $Z$ as in the introduction.  We continue to assume that
$a\in C^{1+\eta}$ and $b\in C^{2+\eta}$ for some $\eta>0$.

\begin{thm} \label{thm:fs}
Suppose that $T$ is mixing.
Assume that $\phi\in L^p(X,\R^k)$ and conditions~\eqref{eq:mart} hold with $p=4$.  Then 
$\hat x_\eps\to_w Z$ as $\eps\to0$.
\end{thm}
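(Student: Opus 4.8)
The plan is to invoke the rough-path homogenization machinery of~\cite{KM16,KM17} (as relaxed in~\cite{CFKMZ,CFKMZprep}), which requires two ingredients: the iterated weak invariance principle for $(W_n,\BBW_n)$, and suitable moment estimates on the process $W_n$ and the iterated sums $\BBW_n$ (or $S_n$). Both of these are already available to us under the stated hypotheses. First I would check that the hypothesis $\phi\in L^4(X,\R^k)$ with conditions~\eqref{eq:mart} holding at $p=4$ feeds Proposition~\ref{prop:iwip}: since $\phi\in L^2(X,\R^k)$ and~\eqref{eq:mart} holds with $p=2$ (these follow a fortiori from the $p=4$ hypotheses, as~\eqref{eq:mart} is monotone in $p$ and $L^4\subseteq L^2$ on a probability space), part~(ii) of that proposition gives $(W_n,\BBW_n)\to_w(W,\BBW)$ with $\BBW^{\beta\gamma}(t)=\int_0^t W^\beta\,dW^\gamma+E^{\beta\gamma}t$, and part~(i) gives convergence of the series defining $\Sigma$ and $E$. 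The mixing hypothesis on $T$ is exactly what Proposition~\ref{prop:iwip} needs.

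Next I would supply the moment bounds. Applying Proposition~\ref{prop:mart} with $p=4$ yields a martingale--coboundary decomposition $\phi=m+\chi\circ T-\chi$ with $m,\chi\in L^4(X,\R^k)$; from this, standard martingale (Burkholder) estimates give $\big|\max_{k\le n}|W_n^{\sqrt{}}\,|\big|$-type control, i.e.\ $|\max_{k\le n}|\phi_k|\,|_4=O(n^{1/2})$ componentwise, and Proposition~\ref{prop:imom} (whose hypotheses are met, as $p=4\ge4$) gives $\big|\max_{k\le n}|S_k^{\beta\gamma}|\big|_2=O(n)$. Rescaling, these translate into $|W_n|_4\ll 1$ and $|\BBW_n|_2\ll 1$ uniformly in $n$, together with the corresponding maximal versions; these are precisely the $p$-variation / moment inputs required by the iterated-rough-path homogenization theorems in~\cite{KM16,KM17,CFKMZ,CFKMZprep} with the regularity assumptions $a\in C^{1+\eta}$, $b\in C^{2+\eta}$. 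The drift correction~\eqref{eq:drift} arises from the $E^{\gamma\beta}$ term in the limiting iterated integral $\BBW$, matching the Itô SDE $dZ=\tilde a(Z)\,dt+b(Z)\,dW$.

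Having assembled (a) the joint functional convergence $(W_n,\BBW_n)\to_w(W,\BBW)$ as a rough path, and (b) the uniform moment bounds on $W_n$ and $\BBW_n$, I would conclude by citing the relevant homogenization theorem directly: the solution map of the rough differential equation associated to~\eqref{eq:fs} is continuous in the rough-path topology, and combining continuity of this map with the weak convergence and uniform-moment (hence tightness) estimates yields $\hat x_\eps\to_w Z$ as $\eps\to0$. This is essentially a verification that all hypotheses of the cited abstract results hold, so the write-up is short.

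The main obstacle is bookkeeping rather than anything deep: one must confirm that the moment exponents actually match the thresholds demanded by the version of the homogenization theorem being cited — in particular that the relaxed iterated-moment hypotheses of~\cite{CFKMZ,CFKMZprep} (rather than the stronger ones that would follow from the erroneous~\cite[Proposition~7.1]{KM16}, cf.\ Remark~\ref{rmk:error}) are the ones invoked, since Proposition~\ref{prop:imom} only delivers $L^2$ control of $S_n$ and not better. One should also take care that the passage between the c\`adl\`ag processes $W_n,\BBW_n$ defined with sums over $0\le i<j<nt$ and the linearly interpolated / time-rescaled processes entering~\eqref{eq:fs} introduces only negligible error, which is routine but worth a sentence.
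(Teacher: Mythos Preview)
Your proposal is correct and follows essentially the same approach as the paper: invoke Proposition~\ref{prop:iwip} for the iterated WIP, supply the moment bounds via Propositions~\ref{prop:mom}/\ref{prop:imom} (or, as you do, the martingale-coboundary decomposition plus Burkholder for the ordinary moments), and then cite the relaxed homogenization theorem from~\cite{CFKMZ}. The paper is simply more specific, pointing to~\cite[Theorem~4.10]{CFKMZ} and stating the exact thresholds it requires, namely $\big|\max_{k\le n}|\phi_k|\big|_{2q}=O(n^{1/2})$ and $\big|\max_{k\le n}|S_k|\big|_q=O(n)$ for some $q>1$, which are met with $q=2$ exactly as you compute.
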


\begin{proof}
The iterated WIP holds by Proposition~\ref{prop:iwip}.
By~\cite[Theorem~4.10]{CFKMZ}, it now suffices to show that
$\big|\max_{k\le n}|\phi_k|\big|_{2q}=O(n^{1/2})$
and
$\big|\max_{k\le n}|S_k|\big|_q=O(n)$ for some $q>1$.
This and more follows from
Propositions~\ref{prop:mom} and~\ref{prop:imom}.
\end{proof}

\begin{rmk}  
The standard WIP and moments are insufficient to determine the limiting stochastic process $Z$.  By rough path theory~\cite{FrizHairer,Lyons98}  the iterated process $\BBW_n$ provides the extra information required to determine limiting stochastic integrals, and thereby the modified drift term~\eqref{eq:drift}.  The iterated moment estimate $S_n^{\beta\gamma}$ provides the required tightness.

Note that $\BBW_n$ and 
$S_n^{\beta\gamma}$ involve summation over $i<j$.  The behaviour of their symmetrized versions (incorporating $i>j$ terms, equivalently $i\ge j$ terms) follows immediately from the ordinary WIP and moment estimate,
and hence provides no extra information.
(Indeed the symmetrized version of $\BBW_n^{\beta\gamma}$ is $W_n^\beta W_n^\gamma$ which converges weakly to $W^\beta W^\gamma$.)
\end{rmk}
\section{Main abstract theorem}
\label{sec:main}

Let $T: X\to X$ be an invertible ergodic measure-preserving transformation on
a probability space $(X,\mu)$.
We suppose that $X$ is covered by a collection $\cW^s$ of disjoint measurable subsets, called 
``local stable leaves'', such that $TW^s(x)\subseteq W^s(Tx)$ for all
$x\in X$, where $W^s(x)$ is the partition element containing~$x$.

Let $\cF_0$ denote the $\sigma$-algebra generated by $\cW^s$.
Note that $W^s(y)\subseteq T^{-1}W^s(x)$ for all $y\in T^{-1}W^s(x)$,
so $T^{-1}W^s(x)$ is a union of elements of $\cW^s$.
Hence $T^{-1}\cF_0\subseteq \cF_0$.
We denote by $L^\infty(\cF_0)$ the set of functions in $L^\infty(X)$ that are $\cF_0$-measurable.

\begin{thm} \label{thm:main}
Let $\phi\in L^\infty(X)$ be a mean zero observable.
Assume that there exists $\beta>1$ and $C>0$ such that for all $n\ge1$,
\begin{itemize}
\item[(a)]
$|\int_X \phi\, \psi \circ T^n\, d\mu |\le C |\psi|_{\infty} n^{-\beta}$
for all $\psi\in L^\infty(\cF_0)$.
\item[(b)]  $\int_X \diam(\phi(T^nW^s))\,d\mu\le Cn^{-\beta}$.
\end{itemize}

Then the conditions in~\eqref{eq:mart} are satisfied for all $1 \le  p<\beta$,
and the conditions in~\eqref{eq:proj} are satisfied for all $1 \le  p<2\beta$.
\end{thm}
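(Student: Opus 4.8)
The plan is to verify the two sums in~\eqref{eq:mart}, and then the weighted variants in~\eqref{eq:proj}, by bounding each of the two relevant quantities $|\E[\phi\circ T^{-n}|\cF_0]|_p$ and $\big|\E[\phi\circ T^n|\cF_0]-\phi\circ T^n\big|_p$ by a constant times $n^{-\beta}$. Since $\phi\in L^\infty$, all these conditional expectations are bounded, so the $L^p$ norm of any one of them is at most a constant times the $L^1$ norm raised to the power $1/p$ (by interpolation, $|f|_p\le |f|_1^{1/p}|f|_\infty^{1-1/p}$); thus it suffices to get an $L^1$ bound of the form $\ll n^{-\beta}$ for each quantity, which after taking $p$-th roots still leaves $n^{-\beta/p}$ — and this is summable precisely when $p<\beta$, matching condition~\eqref{eq:mart}, and when weighted by $n^{-1/2}$ is summable when $p<2\beta$, matching~\eqref{eq:proj}. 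So the whole theorem reduces to two $L^1$ estimates.

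First I would handle the second sum. The term $\E[\phi\circ T^n|\cF_0]-\phi\circ T^n$ measures, at a point $x$, how far $\phi\circ T^n$ is from its average over the stable leaf $W^s(x)$; since $T^nW^s(x)\subseteq W^s(T^nx)$, the value $\phi(T^ny)$ for $y\in W^s(x)$ ranges over $\phi(T^nW^s(x))$, so pointwise
\[
\big|\E[\phi\circ T^n|\cF_0](x)-\phi(T^nx)\big|\le \diam\big(\phi(T^nW^s(x))\big).
\]
Integrating over $X$ and invoking hypothesis~(b) gives $\big|\E[\phi\circ T^n|\cF_0]-\phi\circ T^n\big|_1\le Cn^{-\beta}$, which is exactly what is needed. (The $n=0$ term is finite since $\phi\in L^\infty$.)

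Next I would handle the first sum, $|\E[\phi\circ T^{-n}|\cF_0]|_1$. Here I would use hypothesis~(a) via a duality argument. Write $g_n=\E[\phi\circ T^{-n}|\cF_0]$, which is $\cF_0$-measurable and bounded. Then $|g_n|_1=\int_X g_n\,\sgn(g_n)\,d\mu=\int_X \phi\circ T^{-n}\,\psi\,d\mu$ where $\psi=\sgn(g_n)\in L^\infty(\cF_0)$ with $|\psi|_\infty\le1$, using that $\sgn(g_n)$ is $\cF_0$-measurable so pulling the conditional expectation onto it is free. By $T$-invariance of $\mu$, $\int_X \phi\circ T^{-n}\,\psi\,d\mu=\int_X \phi\,(\psi\circ T^n)\,d\mu$. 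Now $\psi$ is $\cF_0$-measurable, so hypothesis~(a) applies directly with this $\psi$, giving $|g_n|_1\le C|\psi|_\infty n^{-\beta}\le Cn^{-\beta}$. Combining with the interpolation step above and summing (respectively summing against $n^{-1/2}$) completes the proof.

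The main obstacle is really a bookkeeping one: making sure the duality step in the first estimate is legitimate, i.e.\ that $\sgn(g_n)$ is genuinely $\cF_0$-measurable (it is, being a measurable function of the $\cF_0$-measurable $g_n$) and that one may freely replace $\phi\circ T^{-n}$ by $g_n$ inside the pairing (one may, since $\psi$ is $\cF_0$-measurable and $g_n=\E[\phi\circ T^{-n}|\cF_0]$, so $\int g_n\psi=\int \phi\circ T^{-n}\psi$). Everything else — the pointwise diameter bound, the interpolation inequality, and the summability arithmetic distinguishing the $\beta$ and $2\beta$ thresholds — is routine. No additional hypotheses or mixing assumptions are needed for this reduction.
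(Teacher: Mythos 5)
Your proof is correct and follows essentially the same route as the paper: duality with an $\cF_0$-measurable test function for part (a), the pointwise diameter bound for part (b), and then interpolation against the $L^\infty$ bound on $\phi$ to pass from $L^1$ to $L^p$. The paper saturates the $L^p$ norm directly with $\psi=|g_n|^{p-1}\sgn(g_n)$ rather than first getting an $L^1$ bound and interpolating, but this is a cosmetic variation that yields the identical estimate $n^{-\beta/p}$ and the same summability thresholds.
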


\begin{proof}
This is a standard argument.  We again give the details for completeness.

Let 
\[
\xi=|\E[\phi|T^{-n}\cF_0]|^{p-1}
\sgn \E[\phi|T^{-n}\cF_0]
=\psi\circ T^n,
\]
where
\[
\psi=|\E[\phi\circ T^{-n}|\cF_0]|^{p-1}
\sgn \E[\phi\circ T^{-n}|\cF_0]\in L^\infty(\cF_0),
\]
and $|\psi|_\infty\le|\phi|_\infty^{p-1}$.
Then
\begin{align*}
|\E[\phi\circ T^{-n}|\cF_0]|_p^p & =
|\E[\phi|T^{-n}\cF_0]|_p^p  =
 \int_X \E[\phi|T^{-n}\cF_0] \xi\,d\mu
 \\ & =\int_X \E[\phi\xi|T^{-n}\cF_0] \,d\mu
=\int_X \phi\,\xi \,d\mu =\int_X \phi\,\psi\circ T^n \,d\mu.
\end{align*}
By assumption~(a),
\[
|\E[\phi\circ T^{-n}|\cF_0]|_p^p=
\Big|\int_X \phi\,\psi\circ T^n \,d\mu\Big|
\le C|\psi|_\infty n^{-\beta}
\le C|\phi|_\infty^{p-1} n^{-\beta},
\]
and the first part of conditions~\eqref{eq:mart} and~\eqref{eq:proj} follows by taking $p$th roots
and using the restriction on $p$.

Next,
using the pointwise estimate $|\E[\phi | T^{n} \cF_0]-\phi | \le 
\diam(\phi(T^n W^s))$
and assumption~(b),
\begin{align*}
|\E[\phi\circ T^{n}|\cF_0]-\phi\circ T^n|_p^p & =
|\E[\phi|T^{n}\cF_0]-\phi|_p^p\le |\diam(\phi(T^nW^s))|_p^p
\\ & 
\le (2|\phi|_\infty)^{p-1}|\diam(\phi(T^nW^s))|_1
\le 2^{p-1}C|\phi|_\infty^{p-1}n^{-\beta}.
\end{align*}
The second part of conditions~\eqref{eq:mart} and~\eqref{eq:proj} follows.
\end{proof}

In the remainder of this section, we show that the conditions in Theorem~\ref{thm:main} are satisfied in many standard situations.  (The verifications below are not needed for our main example in Section~\ref{sec:gas}.)

\subsection{Verifying condition~(b) in Theorem~\ref{thm:main}}
\label{sec:b}

Suppose that $T:X\to X$ and $\cW^s$ are as above.
Let $Y\subseteq X$ be
a positive measure subset that is a union of local stable leaves in $\cW^s$.  Define the  first return time
$R:Y\to\Z^+$ and first return map $F:Y\to Y$, 
\[
R(y)=\inf\{n\ge1:T^ny\in Y\}, \qquad F(y)=T^{R(y)}y.
\]
   Let $h_n$ be the random variable on $X$ 
given by $h_n (x)=\# \{ 0\le j\le n: T^j x \in Y \}$.

\begin{lemma} \label{lem:b}  Let
$\phi:X\to\R$ be measurable. Suppose that 
$\mu (y\in Y: R(y)>n) =O(n^{-(\beta+1)})$ for some $\beta>1$
and that there are constants $C\ge1$, $\gamma\in(0,1)$ such that
\[
|\diam(\phi(T^n W^s))|\le C\gamma^{h_n(x)} \quad\text{for all $W^s\in\cW^s$, $n\ge1$}.
\]
Then condition~(b) in Theorem~\ref{thm:main} holds.
\end{lemma}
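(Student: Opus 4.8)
The plan is to bound $\int_X \diam(\phi(T^nW^s))\,d\mu$ by splitting the phase space according to the value of the return-count $h_n$ and using the hypothesis $|\diam(\phi(T^nW^s))|\le C\gamma^{h_n(x)}$. First I would write
\[
\int_X \diam(\phi(T^nW^s))\,d\mu \le C\int_X \gamma^{h_n(x)}\,d\mu
= C\sum_{k=0}^{n+1} \gamma^k\, \mu(x\in X: h_n(x)=k),
\]
so everything reduces to controlling $\mu(h_n = k)$, equivalently the upper-tail event $\mu(h_n \le k)$ that the orbit visits $Y$ at most $k$ times in the first $n$ steps.

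The key step is the observation that $\{h_n \le k\}$ forces at least one of the first $k+1$ return times to $Y$ (along the relevant piece of orbit) to be large — roughly of size $n/(k+1)$ — because $k+1$ consecutive return times summing to more than $n$ must have a term exceeding $n/(k+1)$. Using Kac's formula / $T$-invariance of $\mu$ and the polynomial tail hypothesis $\mu(y\in Y: R(y)>m) = O(m^{-(\beta+1)})$, a union bound over the $k+1$ return times gives a bound of the form $\mu(h_n\le k) \ll (k+1)\,(n/(k+1))^{-(\beta+1)} = (k+1)^{\beta+2} n^{-(\beta+1)}$, valid for $k$ in the relevant range (say $k \le n/2$); for $k$ comparable to $n$ one instead just uses the geometric decay $\gamma^k$ directly. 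Then
\[
C\sum_{k\ge0}\gamma^k\,\mu(h_n=k) \le C\sum_{k\ge0}\gamma^k\,\mu(h_n\le k)
\ll n^{-(\beta+1)}\sum_{k=0}^{\lfloor n/2\rfloor}\gamma^k (k+1)^{\beta+2}
+ \sum_{k>n/2}\gamma^k,
\]
and since $\sum_k \gamma^k(k+1)^{\beta+2} < \infty$ and $\sum_{k>n/2}\gamma^k$ decays geometrically in $n$, both terms are $O(n^{-(\beta+1)}) = O(n^{-\beta})$, which is even stronger than condition~(b) requires.

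The main obstacle I anticipate is making the reduction $\mu(h_n \le k) \ll (k+1)^{\beta+2} n^{-(\beta+1)}$ fully rigorous: one must carefully set up the sequence of successive return times to $Y$ as functions on $X$ (handling points not in $Y$ via the first entry time, and tracking which iterate lands where), then invoke $T$-invariance so that each return-time increment has the same distribution as $R$ on $Y$ up to the appropriate measure normalization, and finally organize the union bound so the combinatorial factor $(k+1)$ and the $(n/(k+1))^{-(\beta+1)}$ factor come out cleanly. This is the sort of standard-but-fiddly counting argument that underlies operator renewal theory; once it is in place, the summation over $k$ against the geometric weight $\gamma^k$ is routine and absorbs the polynomial loss in $k$.
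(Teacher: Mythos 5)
Your overall structure (split on the value of $h_n$, geometric weight $\gamma^k$, pigeonhole to force one large return time, polynomial tail plus union bound) is the same as the paper's proof. However, your key quantitative claim, namely
\[
\mu(h_n\le k)\ll (k+1)^{\beta+2}\,n^{-(\beta+1)},
\]
is false, and the error is precisely the point you flag as the ``fiddly'' step and then wave away. The measure $\mu$ lives on $X$, not on $Y$, and the first of the $k+1$ times you invoke is not a return time of the induced map but the first \emph{entry} time $\tau(x)=\min\{j\ge0:T^jx\in Y\}$. Its tail under $\mu$ is one power worse than that of $R$ under $\mu|_Y$: by the Kac disintegration,
\[
\mu(\tau>n)\ =\ \int_Y \max(R-n-1,0)\,d\mu\ \asymp\ n^{-\beta},
\]
not $O(n^{-(\beta+1)})$. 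Already at $k=0$ this kills your bound, since $\mu(h_n=0)=\mu(\tau>n)\asymp n^{-\beta}$. More generally, pushing the union bound through correctly gives $\mu(h_n\le k)\ll (k+1)^{\beta+2}n^{-\beta}$, which is exactly the rate the lemma asserts (and no better). So your argument as written overclaims; after inserting the lost power of $n$ it does yield the lemma's conclusion, and at that point it coincides with the paper's proof.

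The paper sidesteps this subtlety cleanly by never estimating $\mu(h_n\le k)$ directly: it passes immediately to the tower integral
\[
\int_X 1_{\{h_n=k\}}\,d\mu\ \le\ \int_Y 1_{\{h_n=k\}}\,R\,d\mu,
\]
so the extra factor $R$ is explicit from the start. It then applies the pigeonhole to the return times $R\circ F^j$ on $Y$ (where the tail hypothesis applies verbatim), and estimates $\int_Y 1_{\{R\circ F^j\ge n/k\}}R\,d\mu$ by splitting $R=1_{\{R\le n\}}R+1_{\{R>n\}}R$, giving $\ll k^{\beta+1}n^{-\beta}$ per term and hence $\ll k^{\beta+2}n^{-\beta}$ in total. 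If you want to keep your formulation in terms of $\mu(h_n\le k)$ on $X$, you must treat the first hitting time $\tau$ separately from the genuine return times and accept the resulting $n^{-\beta}$ (not $n^{-(\beta+1)}$) rate.
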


\begin{proof}
We have
\[
  \int_X \diam (\phi(T^n W^s))\, d\mu
 \le C  \sum_{k=0}^{n+1} \gamma^k \int_X 1_{\{h_n=k\}} d\mu
\le C\sum_{k=1}^{n+1} \gamma^k \int_Y 1_{\{h_n=k\}}R\, d\mu.
  \]
  If $y\in Y \cap \{h_n=k\}$,  then
$ \sum_{j=0}^{k-1} R\circ F^j> n$, and so
  $R\circ F^j > \frac{n}k$
  for some $j=0,\ldots, k-1$. Hence
  \[
\int_Y 1_{\{h_n=k\}}R\, d\mu
\le \sum_{j=0}^{k-1} \int_Y 1_{\{R\circ F^j\ge \frac{n}{k}\}}R\, d\mu.
  \]

  It follows from the tail assumption on $R$ that
there is a constant $C_1>0$ such that
 $\mu (y\in Y: R(y)>n) \le C_1n^{-(\beta+1)}$ and
  $\int_Y 1_{\{R>n\}}R\, d\mu \le C_1n^{-\beta}$.
Write $R=1_{\{R\le n\}}R+1_{\{R>n\}}R$.  Then
\begin{align*}
\int_Y 1_{\{R\circ F^j\ge \frac{n}{k}\}}R\, d\mu & \le
\int_Y \! n1_{\{R\circ F^j\ge \frac{n}{k}\}}d\mu
+\!\int_Y 1_{\{R>n\}}R\, d\mu
 =n\mu(R\ge {\SMALL\frac{n}{k}}) +\! \int_Y 1_{\{R>n\}}R\,d\mu
 \\ & \le C_1k^{\beta+1}n^{-\beta}+C_1n^{-\beta}
\le 2C_1k^{\beta+1}n^{-\beta}.
\end{align*}
Therefore, $\int_Y 1_{\{h_n=k\}}R\, d\mu \le 2C_1k^{\beta+2}n^{-\beta}$,
and
   \[
  \int_X \diam(\phi(T^n W^s))\,d\mu\le 2CC_1
n^{-\beta}\sum_{k=1}^\infty \gamma^k k^{\beta+2} = O(n^{-\beta}),
  \]
as required.
\end{proof}

\subsection{Verifying condition~(a) in Theorem~\ref{thm:main}}

For completeness, we show that Theorem~\ref{thm:main} includes examples that fit within the Chernov-Markarian-Zhang setup~\cite{ChernovZhang05, ChernovZhangSD,Markarian04}
(in the summable decay of correlations regime, so $\beta>1$)
for H\"older mean zero observables $\phi:X\to\R$.
In particular, we recover limit theorems that have been obtained previously for such invertible examples~\cite{KM16,MN05,MTorok12,MV16}.
Since there are no new results here, we only sketch the construction from~\cite{ChernovZhang05,Markarian04}.

\begin{rmk}  When treating examples falling within the Chernov-Markarian-Zhang setup, a significant (over)simplification is to suppose that there is exponential (or rapid) contraction of stable leaves under the underlying dynamics.  For billiards with subexponential decay of correlations, such a condition fails since on average stable directions contract as slowly as unstable directions expand.  In general, one should assume that there is an inducing set (called $Y$ below) such that expansion {\em and} contraction occurs only on visits to $Y$.  This general point of view is the one adopted here, as codified by the random variable $h_n$ in Lemma~\ref{lem:b}.
\end{rmk}

It is part of the setup that $X$ is a metric space and
$T:X\to X$ is the canonical billiard map corresponding to the first collision with the boundary of the
billiard table.   It is assumed (and for many classes of billiards explicitly constructed) that there is a set $Y \subset X$
and a first return map $F=T^R:Y\to Y$ such that $F$ is uniformly hyperbolic and the return time
has tail bounds satisfying
$\mu(R>n)=O(n^{-(\beta_0+1)})$, where we assume that $\beta_0>1$ (see \cite[Section~4]{ChernovZhang05}).
Moreover, $Y$ is modelled by a Young tower with exponential tails~\cite{Young98}.  
A standard argument (see for example~\cite[Theorem~4]{ChernovZhang05}) shows that
$T:X\to X$ is modelled by a Young tower $f:\Delta\to\Delta$ with polynomial tails~\cite{Young99}, with 
tail rate $O(n^{-(\beta+1)})$ for all $\beta<\beta_0$.
In particular, 
there is a measure-preserving semiconjugacy $\pi:\Delta\to X$, so we can work with $f:\Delta\to\Delta$ instead of $T:X\to X$ and observables $\hat\phi=\phi\circ\pi:\Delta\to\R$ where
$\phi:X\to\R$ is H\"older.

The final part of the set up that we require is that $\Delta$ is covered by stable leaves $\cW^s$ satisfying 
$T(W(x)) \subseteq W(Tx)$, for all $x \in \Delta$, where $W(x)$ is the element of $\cW^s$ containing $x$.
Due to the uniform hyperbolicity of $F = T^R$, the contraction condition in Lemma~\ref{lem:b} 
holds \cite[Section~4.2]{ChernovZhang05}.
Hence $f:\Delta\to\Delta$ satisfies condition~(b) of Theorem~\ref{thm:main} and it remains to verify condition~(a).

Let $\bar f:\bar\Delta\to\bar\Delta$ denote the quotient (one-sided) Young tower obtained by quotienting along stable leaves.
Consider observables $\bar\phi:\bar\Delta\to\R$ that are Lipschitz with respect to a symbolic metric on $\bar\Delta$, with Lipschitz norm $\|\bar\phi\|$.
By~\cite[Theorem~3]{Young99}, there is a constant $C>0$ such that
\begin{equation} \label{eq:Young}
\Big|\int_{\bar\Delta} \bar\phi\, \bar\psi \circ \bar f^n\, d\bar\mu_\Delta -\int_{\bar\Delta} \bar\phi\,d\bar\mu_\Delta\int_{\bar\Delta} \bar\psi\,d\bar\mu_\Delta\Big|\le C\|\bar\phi\||\bar\psi|_\infty n^{-\beta},
\end{equation}
for all $\bar\phi:\bar\Delta\to\R$ Lipschitz, $\bar\psi\in L^\infty(\bar\Delta)$, $n\ge1$.
(The dependence on $\|\bar\phi\|$ and $|\bar\psi|_\infty$ is not stated explicitly in \cite[Theorem~3]{Young99} but follows by a standard argument using the uniform boundedness principle.  Alternatively, see~\cite{KKMapp} for a direct argument.)

Returning to the two-sided tower $f:\Delta\to\Delta$ and the lifted observable
$\hat\phi=\phi\circ\pi:\Delta\to\R$, it follows for instance from~\cite[Proposition~5.3]{KKMapp} that there exists a choice (depending only on the H\"older exponent of $\phi$) of symbolic metric on $\bar\Delta$ and a sequence of observables
$\tilde\phi_\ell\in L^\infty(\Delta)$, $\ell\ge1$, such that
\begin{itemize}
\item[(i)] $\tilde\phi_\ell$ is $\cF_0$-measurable and hence projects down to an observable $\bar\phi_\ell:\bar\Delta\to\R$.  
\item[(ii)] 
$\sup_{\ell\ge1}\|L^\ell \bar\phi_\ell\|<\infty$.
\hspace{1em} (iii) $\lim_{\ell\to\infty}|\phi\circ f^\ell-\tilde\phi_\ell|_1=0$.
\end{itemize}
Here, $\cF_0$ is the $\sigma$-algebra generated by $\cW^s$ and $L$ is the transfer operator corresponding to $\bar f:\bar\Delta\to\bar\Delta$.

Let $\psi\in L^\infty(\cF_0)$ with projection $\bar\psi\in L^\infty(\bar\Delta)$.
Following~\cite[Proof of Corollary~5.4]{KKMapp}, 
\[
\SMALL
\int_{\Delta} \phi\, \psi \circ f^n\, d\mu_\Delta =
\int_{\Delta} \phi\circ f^\ell\, \psi \circ f^{\ell+n}\, d\mu_\Delta =
I_1+I_2+I_3,
\]
where
\begin{align*}
I_1 & =
\int_{\Delta} (\phi\circ f^\ell-\tilde\phi_\ell)\, \psi \circ f^{\ell+n}\, d\mu_\Delta, \qquad
 I_2  = 
\int_\Delta \tilde\phi_\ell\,d\mu_\Delta 
\int_\Delta \psi\,d\mu_\Delta,
\\ I_3 & =
\int_{\Delta} \tilde\phi_\ell\, \psi \circ f^{\ell+n}\, d\mu_\Delta 
-\int_\Delta \tilde\phi_\ell\,d\mu_\Delta 
\int_\Delta \psi\,d\mu_\Delta.
\end{align*}
Now $|I_1|\le |\phi\circ f^\ell-\tilde\phi_\ell|_1|\psi|_\infty$.
Also, $I_2=\int_\Delta (\tilde\phi_\ell-\phi\circ f^\ell)\,d\mu_\Delta
\int_\Delta \psi\,d\mu_\Delta$,
so $|I_2|\le |\phi\circ f^\ell-\tilde\phi_\ell|_1|\psi|_1$.
By (iii), $\lim_{\ell\to\infty}I_j=0$ for $j=1,2$.
By~(i),
\begin{align*}
I_3 & = 
\int_{\bar\Delta} \bar\phi_\ell\, \bar\psi \circ \bar f^{\ell+n}\, d\bar\mu_\Delta 
-\int_{\bar\Delta} \bar\phi_\ell\,d\bar\mu_\Delta 
\int_{\bar\Delta} \bar\psi\,d\bar\mu_\Delta
  \\ & 
= \int_{\bar\Delta} L^\ell \bar\phi_\ell\, \bar\psi \circ \bar f^n\, d\bar\mu_\Delta 
-\int_{\bar\Delta} L^\ell \bar\phi_\ell\,d\bar\mu_\Delta 
\int_{\bar\Delta} \bar\psi\,d\bar\mu_\Delta,
\end{align*}
so by~\eqref{eq:Young} and (ii),
$|I_3|\le C\|L^\ell\bar\phi_\ell\||\bar\psi|_\infty\, n^{-\beta}\ll |\psi|_\infty\, n^{-\beta}$.
Together, these estimates establish condition (a) in Theorem~\ref{thm:main}.

%%%%%%%%%%%%%%%%%%%%%%%%%%%%%%%%%%%%%

\section{Application to Lorentz gases}
\label{sec:gas}

In this section, we use the results of \cite{BaladiDemersLiverani18} to show that the hypotheses of Theorem~\ref{thm:main} (with $\beta>1$ arbitrarily large) are satisfied for the 
time-one map corresponding to a finite horizon planar periodic Lorentz gas for all H\"older observables $\phi$.
Hence the results of Section~\ref{sec:mart} hold for all $p \ge 1$, establishing the results listed in the introduction.

\subsection{Setting and main result for Lorentz gases}

Let $\T^2 = \R^2/\Z^2$ denote the two-torus, and let $B_i \subset \T^2$, $i=1, \ldots, d$, denote open convex sets 
such that their closures are pairwise disjoint and their boundaries are $C^3$ curves with strictly positive curvature.
We refer to the sets $B_i$ as scatterers.
The billiard flow $\Phi_t$ is defined by the motion of a point particle in $Q = \T^2 \setminus \bigcup_{i=1}^d B_i$ 
undergoing elastic collisions at the boundaries of the scatterers and moving at constant
velocity with unit speed between collisions.  Hence $\Phi_t$ is defined on the three dimensional phase space
\[
X = Q\times  \bbS^1, \qquad \bbS^1 = [0, 2\pi]/\sim \; , 
\]
where $\sim$ indicates that 0 and $2\pi$ are identified.

Between collisions,
$\Phi_t(x_1,x_2,\theta) = (x_1 + t\cos \theta, x_2 + t \sin \theta, \theta)$,
while at collisions the point $(x,\theta^-)$ becomes $(x,\theta^+)$ where $\theta^-$ and $\theta^+$ are the pre- and
post-collisions angles, respectively.
Defining $X_0 = X/\sim$, where we identify  $(x, \theta^-) \sim (x,\theta^+)$ at collisions, we obtain a continuous flow
$\Phi_t:X_0\to X_0$.

Let $M = \bigcup_{i=1}^d \partial B_i \times [-\pi/2, \pi/2]$.
The billiard map $F:M\to M$ is the discrete-time map which maps one collision to the next.  Parametrizing
each $\partial B_i$ by an arclength coordinate $r$ (oriented clockwise) and letting $\vf$ denote the angle that the
post-collision velocity vector makes with the normal to the scatterer (directed inwards in $Q$), we obtain the
standard coordinates $(r,\vf)$ on $M$.

For $x \in X$, define the collision time $\tau(x)$ to be the first time $t > 0$ that $\Phi_t(x) \in M$.
Since the closures of the scatterers are disjoint, there exists $\tau_{\min} >0$ such that $\tau(x) \ge \tau_{\min}$
for all $x \in M$.  In addition, we assume that the billiard has finite horizon so that there exists
$\tau_{\max} < \infty$ such that $\tau(x) \le \tau_{\max}$ for all $x \in X$. 

It is well known (see \cite[Section~3.3]{ChernovMarkarian}) that the flow preserves the contact form
\[
\omega = \cos \theta \, dx_1 + \sin \theta \, dx_2,
\]
so that the contact volume is $\omega \wedge d\omega = dx_1 \wedge d\theta \wedge dx_2$.  
We denote by $\mu$ the normalized Lebesgue measure on $X$, which by the preceding calculation is preserved by the
flow.  

The main result of this section is the following.

\begin{thm}
\label{thm:app}
Let $T$ be the time-one map corresponding to a finite horizon Lorentz gas as described above,
and let $\phi:X\to\R$ be a mean zero H\"older observable.
Then conditions (a) and (b) of Theorem~\ref{thm:main} hold 
with $n^{-\beta}$ replaced by $e^{-cn}$ for some $c>0$.

As a consequence,  conditions~\eqref{eq:mart} and~\eqref{eq:proj} hold for all $p \ge 1$, and all the results described in Section~\ref{sec:mart}
apply in this setting.
\end{thm}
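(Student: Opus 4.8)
The plan is to read conditions~(a) and~(b) of Theorem~\ref{thm:main} off the spectral and geometric results of~\cite{BaladiDemersLiverani18}, once $\cW^s$ is chosen correctly. The only viable choice is the family of \emph{strong} stable manifolds $\cW^s=\{W^{ss}(x)\}$ of the billiard flow: since $W^{ss}(x)$ is characterised by $d(\Phi_t x,\Phi_t y)\to0$ as $t\to+\infty$, it is carried into $W^{ss}(\Phi_1 x)$ by $T=\Phi_1$, so $TW^s(x)\subseteq W^s(Tx)$ and hence $T^{-1}\cF_0\subseteq\cF_0$, as the setup of Theorem~\ref{thm:main} demands. (These leaves exist $\mu$-a.e.; the exceptional set can be partitioned into points.) The \emph{weak} stable manifolds would fail condition~(b), since they contain the flow direction, which is neutral for $T$ and does not contract.

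For condition~(b), since $\phi$ is $C^\eta$ we have $\diam\big(\phi(T^nW^s(x))\big)\le\|\phi\|_{C^\eta}\,\diam\big(T^nW^s(x)\big)^\eta$. By the uniform exponential contraction of homogeneous stable curves for finite horizon dispersing billiards --- a standard ingredient underlying~\cite{BaladiDemersLiverani18} (see also~\cite{ChernovMarkarian}) --- there are $C,c>0$ with $\diam(T^nW^s(x))\le Ce^{-cn}$ for a.e.\ $x$ and all $n\ge1$, using also $\diam X<\infty$. Integrating over $X$ gives condition~(b) with $n^{-\beta}$ replaced by $C\|\phi\|_{C^\eta}e^{-c\eta n}$.

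For condition~(a), I would invoke the anisotropic Banach space $\cB$ and the time-one transfer operator $\cL=\cL_1$ (with respect to $\mu$) from~\cite{BaladiDemersLiverani18}. Their exponential mixing result, evaluated at integer times, amounts to a spectral gap: $\|\cL^n h\|_\cB\le Ce^{-cn}\|h\|_\cB$ for all $h\in\cB$ with $\int_X h\,d\mu=0$; together with the continuous inclusion $C^\eta\hookrightarrow\cB$ this gives $\|\cL^n\phi\|_\cB\le Ce^{-cn}\|\phi\|_{C^\eta}$, using $\int_X\phi\,d\mu=0$. Writing $\int_X\phi\,\psi\circ T^n\,d\mu=\int_X(\cL^n\phi)\,\psi\,d\mu$, it remains to bound this pairing by $C\|\cL^n\phi\|_\cB\,|\psi|_\infty$ for $\psi\in L^\infty(\cF_0)$. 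Here the choice of $\cW^s$ is essential: disintegrating $\mu$ along the stable leaves (whose conditional densities are Hölder) and using that $\psi$ is constant on each leaf, $\int_X(\cL^n\phi)\psi\,d\mu$ becomes an integral of $\psi$ against the leafwise integrals of $\cL^n\phi$, and the weak norm of $\cB$ controls integration along stable leaves; one also notes that $\cL^n\phi\in L^1(X)$ for $n\ge0$, so the pairing is the ordinary Lebesgue integral and coincides with the distributional one. This yields condition~(a) with rate $C\|\phi\|_{C^\eta}e^{-cn}$.

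Since $e^{-cn}\le C_\beta n^{-\beta}$ for every $\beta>1$, Theorem~\ref{thm:main} then applies with $\beta$ arbitrarily large, so conditions~\eqref{eq:mart} and~\eqref{eq:proj} hold for all $p\ge1$; and the Lorentz flow being exponentially mixing, the time-one map $T$ is mixing. Hence Corollary~\ref{cor:mart}, Theorem~\ref{thm:CLT}, Propositions~\ref{prop:iwip}, \ref{prop:mom}, \ref{prop:imom} and Theorem~\ref{thm:fs} all apply, giving the CLT, WIP, moment and iterated-moment estimates, iterated WIP, and homogenization stated in the introduction. I expect the main obstacle to be the step extending the decay estimate from Hölder test functions (for which it is essentially~\cite{BaladiDemersLiverani18}) to merely bounded $\cF_0$-measurable $\psi$ --- that is, verifying that bounded functions constant along the strong stable leaves are admissible test objects for the weak norm of $\cB$, and that this family of leaves is compatible with the one used in the construction of $\cB$; the contraction estimate behind condition~(b) and the spectral gap behind the rest of condition~(a) are quoted directly from~\cite{BaladiDemersLiverani18}.
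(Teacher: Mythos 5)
Your plan is the same as the paper's: take $\cW^s$ to be the family of (homogeneous) strong stable manifolds for the flow, get condition~(b) from uniform exponential contraction of stable leaves plus H\"older continuity of $\phi$, and get condition~(a) from the spectral results of~\cite{BaladiDemersLiverani18} together with a pairing estimate allowing $L^\infty(\cF_0)$ test functions. The skeleton is right, and your identification of the obstacle --- proving that bounded functions constant along stable leaves are admissible test objects --- is precisely where the real work lies.

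Two points are worth flagging. First, the pairing estimate you defer is not a routine compatibility check: it is the paper's Lemma~\ref{lem:key}, $|\phi(\psi)| \le C|\phi|_w{|\psi|}_{C^\alpha(\cW^s)}$, proved by disintegrating $\mu$ into a proper standard family along the strong stable leaves (after lifting from $M$ to $X$) and crucially using that the factor measure satisfies $\int_\Xi |V_\xi|^{-1}\,d\lambda(\xi) < \infty$ (\cite[Exercise~7.15]{ChernovMarkarian}). The weak norm $|\cdot|_w$ is a supremum over \emph{admissible curves} $\cA^s$, whereas $\psi$ is only leafwise H\"older on the partition elements of $\cW^s$, so one must sum over countably many leaves with a weight $|V_\xi|^{-1}$; the finiteness of that sum is exactly what makes the lemma true and is the technical crux, not a side issue. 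Second, the decay estimate you quote, $\|\cL^n h\|_{\cB}\le Ce^{-cn}\|h\|_{\cB}$ for mean-zero $h$, is not what~\cite{BaladiDemersLiverani18} provide. They prove a spectral gap for the \emph{generator} $Z$, and via~\cite{Butterley16} this yields $\cL_t = \cP_t + e^{t\widehat Z}\Pi$ with $|\cP_t\phi|_w \le C_\nu e^{-\nu t}\|Z\phi\|_{\cB}$ for $\phi\in\operatorname{Dom}(Z)$. Note the weak norm on the left and the generator norm on the right: one does not get uniform exponential decay of $\cL_1^n$ on all of $\cB$. Consequently the argument first applies to $\phi\in C^2(X)\cap C^0(X_0)$ (using $\|Z\phi\|_{\cB}\lesssim|\phi|_{C^2}$) and is then extended to merely H\"older $\phi$ by mollification, a step your sketch omits.
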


We remark that the observable $\phi$ is assumed to be H\"older continuous only on $X$, not $X_0$.  Thus
$\phi$ is allowed to be discontinuous at the boundary of $X$, i.e. at collisions.  In particular, Theorem~\ref{thm:app} applies
to the velocity.

%%%%%%%%%%%%%%%%%%%%%%%%%%%%%%%%%%%%%%

\subsection{Proof of Theorem~\ref{thm:app}}

The remainder of this section is devoted to the proof of Theorem~\ref{thm:app},
which consists of verifying the conditions of Theorem~\ref{thm:main}.
First we recall some of the essential properties and main constructions used in \cite{BaladiDemersLiverani18}.

%%%%%%%%%%%%%%%%%%%%%%%%%%%%%%%%%%%%%%

\paragraph{Hyperbolicity and singularities}

The singularities for both the collision map and the flow are created by tangential collisions with the scatterers.
Let $\cS_0 = \{ (r,\vf) \in M : \vf = \pm\frac{\pi}{2} \}$.  Away from the set $\cS_1 = \cS_0 \cup F^{-1} \cS_0$ 
(resp. $\cS_{-1} = \cS_0 \cup F\cS_0$)
the map $F$ (resp. $F^{-1}$) is uniformly hyperbolic:  Letting 
\begin{equation} \label{eq:Lambda}
\Lambda = 1 + 2 \tau_{\min} \cK_{\min},
\end{equation}
 where
$\cK_{\min}$ denotes the minimum curvature of the scatterers, there exist stable $\bar{C}^s$ and unstable
$\bar{C}^u$ cones in the tangent space of $M$ such that stable and unstable vectors in these cones undergo
uniform expansion and contraction at an exponential rate given by $\Lambda$.  Flowing
$\bar{C}^s$ backward and $\bar{C}^u$ forward between collisions allows us to define
two families of stable $C^s$ and unstable $C^u$ cones for the flow that lie in the kernel of the contact form. (Hence they
are `flat' two-dimensional cones in the tangent space of the flow; see \cite[Sect.~2.1]{BaladiDemersLiverani18} for an explicit definition
of these cones.)

Let $P^\pm$ denote the projections from
$X$ onto $M$ under the forward and backward flow.   Then $C^u$ is continuous on $X$ away from the
surface
$\cS_{-1}^{-} = \{ x \in X : P^+(x) \in \cS_{-1} \}$, and $C^s$ is continuous on $X$ away from
the surface $\cS_1^{+} = \{ x \in X : P^-(x) \in \cS_1 \}$.
To maintain control of distortion, we define the standard homogeneity strips
\[
\mathbb{H}_k = \big\{ (r, \vf) \in M : \tfrac{\pi}{2} - \tfrac{1}{k^2} \le \vf \le \tfrac{\pi}{2} - \tfrac{1}{(k+1)^2} \big\}, \quad k\ge k_0,
\]
for some $k_0\ge1$ which is determined to ensure a one-step expansion condition.  A similar set 
of homogeneity strips $\mathbb{H}_{-k}$, $k \ge k_0$, is defined for $\vf$ near $-\frac{\pi}{2}$.

Following \cite{BaladiDemersLiverani18}, we define a set of admissible stable curves $\cA^s$ for the flow.  A $C^2$ curve $W$ belongs to 
$\cA^s$ if the tangent vector at each point of $W$ belongs to $C^s$, and $W$ has curvature bounded by $B_0$ and 
length $|W|$ bounded by $\delta_0$.  Here, $\delta_0 >0$ is chosen to satisfy a complexity bound
(see \cite[Lemma~3.8]{BaladiDemersLiverani18})  and $B_0$ is chosen large enough that the family $\cA^s$ is invariant under
$\Phi_{-t}$, $t \ge 0$ (once long pieces are subdivided according to the length $\delta_0$).  
We call $W \in \cA^s$ {\em homogeneous} if $P^+(W)$ lies in a single homogeneity strip.  

We define $\cW^s$ to be the family of maximal $C^2$ connected homogeneous stable manifolds for the flow.  
Note that $\cW^s$ forms a partition of $X$ (mod $\mu$-measure 0).
Moreover, each element of
$\cW^s$ (up to subdivision due to the length $\delta_0$) belongs to $\cA^s$.  When we define a homogeneous stable manifold 
$W \in \cW^s$, we take into account cuts introduced at the boundary of the extended singularity set, which includes the boundaries of the homogeneity strips.  Thus $P^+(\Phi_t W)$ lies in a single homogeneity strip for all $t \ge 0$.\footnote{Due to our definition
of $C^s$, if $W \in \cA^s$, then $P^+(W)$ is a stable curve for the map; and if $W \in \cW^s$, then $P^+(W)$ is a local
homogeneous stable manifold for the map.}
Let $\mathcal{F}_0$ denote the sigma algebra generated by elements of $\cW^s$.  
Since $\cW^s$ forms a partition of $X$,
it follows that $\mathcal{F}_0$ comprises countable unions of elements of $\cW^s$.

%%%%%%%%%%%%%%%%%%%%%%%%%%%%%%%%%%%%%%

\paragraph{Norms and Banach spaces}

With the class of admissible stable curves defined, we can now describe the Banach spaces used to prove decay of correlations in~\cite{BaladiDemersLiverani18}.

Let $\alpha \in (0, \frac{1}{3}]$.  For $W \in \cA^s$, let $C^\alpha(W)$ denote the closure of $C^1$ functions in the Holder norm defined
by
\[
{|\psi|}_{C^\alpha(W)} = \sup_{x \in W} |\psi(x)| + \sup_{\substack{x, x' \in W \\ x \neq x'}} |\psi(x) - \psi(x')| \, d_W(x,x')^{-\alpha}, 
\]
where $d_W$ is arclength distance along $W$.
Define the weak norm of $\phi \in C^0(X)$ by
\[
|\phi|_w = \sup_{W \in \cA^s} \sup_{\substack{\psi \in C^\alpha(W) \\ {|\psi|}_{C^\alpha(W)} \le 1}} \int_W \phi \, \psi \, dm_W,
\]
where $m_W$ denotes arclength measure on $W$.
The weak space $\cB_w$ is defined as the completion of the set $\{ \phi \in C^0(X_0) : |\phi|_w < \infty \}$.

The strong norm $\| \phi \|_{\cB}$ is defined as in \cite[Section~2.3]{BaladiDemersLiverani18}.  The 
space $\cB$ is similarly defined as the
completion of a class of smooth functions on $X_0$ in the $\| \cdot \|_{\cB}$ norm.  
Since we do not need the precise definition of $\| \cdot \|_{\cB}$ here, we 
omit its definition; however, the following lemma summarizes some of the important properties of these spaces.

\begin{lemma}(\cite[Lemmas~3.9 and 3.10]{BaladiDemersLiverani18})
\label{lem:inclusion}
We have the inclusions
\[
C^1(X)\cap C^0(X) \subset \cB \subset \cB_w \subset (C^\alpha(X))^* ,
\]
where the first two inclusions are injective.  Moreover, $| \cdot |_w \le  \| \cdot \|_{\cB} \le C {| \cdot |}_{C^1(X)}$ 
and the unit ball of $\cB$ is compactly
embedded in $\cB_w$.
\end{lemma}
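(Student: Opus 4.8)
The plan is to dispatch the easy assertions, which are formal consequences of the definitions, and to concentrate on the two structural facts: injectivity of $\cB\hookrightarrow\cB_w$ and the compact embedding. (Since the lemma is quoted from~\cite[Lemmas~3.9 and~3.10]{BaladiDemersLiverani18}, I only indicate the shape of the argument.) The bounded inclusions and the norm comparison come straight from the construction: the strong norm is built to dominate the weak norm, so $|\phi|_w\le\|\phi\|_\cB$ and the identity on smooth functions extends to a bounded map $\cB\to\cB_w$; and for $\|\phi\|_\cB\le C|\phi|_{C^1(X)}$ one estimates each constituent seminorm of $\|\cdot\|_\cB$ in turn---each is a finite combination of integrals $\int_W\phi\,\psi\,dm_W$ (and analogous integrals involving a first derivative of $\phi$) over a curve $W\in\cA^s$ against test functions of bounded $C^\alpha(W)$-norm, and since $|W|\le\delta_0$ and everything in sight is uniformly bounded, each is $\ll|\phi|_{C^1(X)}$. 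Injectivity of $C^1(X)\cap C^0(X)\hookrightarrow\cB$ follows at once: $\|\phi\|_\cB=0$ gives $|\phi|_w=0$, hence $\int_W\phi\,\psi\,dm_W=0$ for all $W\in\cA^s$ and all $\psi\in C^\alpha(W)$, so $\phi$ vanishes on every admissible stable curve, and these cover $X$, whence $\phi\equiv0$.

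For $\cB_w\hookrightarrow(C^\alpha(X))^*$ and the injectivity of $\cB\hookrightarrow\cB_w$ I would realize abstract elements concretely. Disintegrating $\mu$ along the partition $\cW^s$ (whose pieces, subdivided to length $\le\delta_0$, lie in $\cA^s$), with conditional densities bounded and H\"older by the usual distortion control, one writes $\int_X\phi\,\psi\,d\mu$ as an average over leaves of $\int_W\phi\,(\psi\rho_W)\,dm_W$; bounding the inner integral by $|\phi|_w\,|\psi\rho_W|_{C^\alpha(W)}\ll|\phi|_w\,|\psi|_{C^\alpha(X)}$ shows that $\phi\mapsto\big(\psi\mapsto\int_X\phi\,\psi\,d\mu\big)$ is $|\cdot|_w$-continuous, giving the bounded map $\cB_w\to(C^\alpha(X))^*$. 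More to the point, for a $\|\cdot\|_\cB$-Cauchy sequence $\phi_n$ of smooth functions and a fixed $W\in\cA^s$, $\psi\in C^\alpha(W)$, one has $|\int_W(\phi_n-\phi_m)\psi\,dm_W|\le\|\phi_n-\phi_m\|_\cB\,|\psi|_{C^\alpha(W)}$, so each such leafwise pairing converges; likewise every constituent seminorm $\cL$ of $\|\cdot\|_\cB$ can, on smooth functions, be written as a fixed finite combination of leafwise pairings against H\"older test functions (integrating by parts when $\cL$ involves a derivative of its argument), and hence satisfies $|\cL(\phi_n-\phi_m)|\ll\|\phi_n-\phi_m\|_\cB$ and $|\cL(\phi_n)|\ll|\phi_n|_w$ with constants depending on $\cL$ but not on $n$. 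If now $h\in\cB$ maps to $0$ in $\cB_w$, represented by smooth $\phi_n$ with $|\phi_n|_w\to0$, then $\cL(\phi_n)\to0$ for every such $\cL$; a routine $\varepsilon/2$ argument using the Cauchy property shows that the norm of the completion element $h$ equals $\sup_\cL\lim_n\cL(\phi_n)$, all of whose terms vanish, so $\|h\|_\cB=0$ and $h=0$.

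The step I expect to be the main obstacle is the compact embedding of the unit ball of $\cB$ into $\cB_w$, which is the standard Rellich-type argument for anisotropic spaces. Given $\phi_n$ with $\|\phi_n\|_\cB\le1$, the strong norm supplies, beyond the weak norm, a H\"older-type modulus for the dependence of $\int_W\phi_n\,\psi\,dm_W$ on $W$ (equicontinuity as the curve varies) and a gain of test-function regularity making, for each fixed $W$, the family $\{\psi\mapsto\int_W\phi_n\,\psi\,dm_W\}$ precompact on $C^\alpha(W)$-balls via Arzel\`a--Ascoli. One fixes a scale, chooses a finite net in $\cA^s$ (using that curves with tangent in the cone field $C^s$, curvature $\le B_0$ and length $\le\delta_0$ form a compact family in a suitable Hausdorff-type topology), extracts by a diagonal argument a subsequence along which $\int_W\phi_n\,\psi\,dm_W$ converges uniformly over the net and a dense set of test functions, and transfers this to all of $\cA^s$ using the strong-norm equicontinuity, thereby obtaining a $|\cdot|_w$-Cauchy subsequence. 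The billiard-specific inputs---absolute continuity and bounded distortion of the stable foliation, compactness of $\cA^s$, and uniformity of all constants once $k_0,\delta_0,B_0$ are fixed by the one-step expansion and complexity bounds---are exactly what~\cite{BaladiDemersLiverani18} verifies, so for those I would simply cite Lemmas~3.9 and~3.10 there.
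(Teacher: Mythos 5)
The paper does not prove this lemma: it is stated with the citation \cite[Lemmas~3.9 and 3.10]{BaladiDemersLiverani18} and used as a black box, so there is no internal proof to compare your sketch against. Your outline is broadly in the spirit of how such facts are established in the anisotropic Banach space literature (norm comparison from the definitions, disintegration of $\mu$ along $\cW^s$ for the $(C^\alpha(X))^*$ pairing, an Arzel\`a--Ascoli/diagonal argument for compactness), and the part about the pairing $\cB_w\to(C^\alpha(X))^*$ is essentially the same computation that the paper carries out explicitly in its proof of Lemma~\ref{lem:key}.

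However, the argument you give for injectivity of $\cB\hookrightarrow\cB_w$ has a real gap. You take a $\|\cdot\|_\cB$-Cauchy sequence $\phi_n$ of smooth functions with $|\phi_n|_w\to0$, observe that each constituent seminorm $\cL$ satisfies $|\cL(\phi_n)|\ll_\cL|\phi_n|_w\to0$, and conclude $\|h\|_\cB=\sup_\cL\lim_n\cL(\phi_n)=0$. The equality $\|h\|_\cB=\sup_\cL\lim_n\cL(\phi_n)$ tacitly interchanges $\lim_n$ and $\sup_\cL$; what one actually knows is $\|h\|_\cB=\lim_n\sup_\cL\cL(\phi_n)$, and even with the Cauchy property in $\|\cdot\|_\cB$, the pointwise convergence $\cL(\phi_n)\to0$ for each fixed $\cL$ (with a constant in $\ll$ that blows up as $\cL$ ranges over, say, pairs of stable curves at shrinking distance, as in the unstable/neutral parts of the strong norm) does not force $\sup_\cL\cL(\phi_n)\to0$. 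Indeed, if $|\cL(\phi)|\ll|\phi|_w$ held uniformly over the constituent seminorms of $\|\cdot\|_\cB$, the two norms would be equivalent and $\cB=\cB_w$, which defeats the purpose of the construction. Closing this step requires a genuine additional idea (in \cite{BaladiDemersLiverani18} it comes from identifying elements of $\cB$ concretely as distributions and exploiting the structure of the strong norm), not just an $\varepsilon/2$ manipulation of the Cauchy property. The remaining assertions in your sketch (the norm inequality $|\cdot|_w\le\|\cdot\|_\cB\le C|\cdot|_{C^1(X)}$, injectivity of $C^1(X)\cap C^0(X)\hookrightarrow\cB$, and the compact embedding) are at the correct level of the argument, though they too ultimately rest on the precise definition of $\|\cdot\|_\cB$ which the paper deliberately does not reproduce.
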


When we refer to functions $\phi \in C^0(X)$ as elements of $\cB$ or $\cB_w$, we identify $\phi$ with the measure
$\phi\, d\mu$.  With this identification, the two definitions of $\cL_t \phi$ given in the next section are reconciled.

The following lemma is central to our verification of condition~(a) in Theorem~\ref{thm:main}, and is a strengthening of 
\cite[Lemma~2.11]{BaladiDemersLiverani18}.  Let $C^\alpha(\cW^s)$ denote those functions which are in $C^\alpha(W)$ for all $W \in \cW^s$
with ${|\psi|}_{C^\alpha(\cW^s)} = \sup_{W \in \cW^s} {|\psi|}_{C^\alpha(W)}$
finite.

\begin{lemma}
\label{lem:key}
There exists $C>0$ such that for $\phi \in \cB_w$ and $\psi \in C^\alpha(\cW^s)$,
\[
|\phi(\psi)| \le C |\phi|_w {|\psi|}_{C^\alpha(\cW^s)} .
\]
\end{lemma}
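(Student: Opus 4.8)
The plan is to reduce the pairing $|\phi(\psi)|$ over the countable partition $\cW^s$ to the supremum over the family $\cA^s$ of admissible stable curves that defines the weak norm $|\cdot|_w$. The key observation is that by construction each element $W \in \cW^s$ is, up to subdivision at length scale $\delta_0$, a finite union of curves in $\cA^s$; and the test function $\psi$ restricted to any such piece has $C^\alpha$-norm bounded by ${|\psi|}_{C^\alpha(\cW^s)}$. So for smooth $\phi \in C^0(X)$, identified with the measure $\phi\,d\mu$, I would disintegrate the volume $\mu$ along the stable foliation $\cW^s$ and write $\phi(\psi) = \int_X \phi\,\psi\,d\mu = \int \big(\int_W \phi\,\psi\, dm_W\big)\,d\bmu(W)$, where $\bmu$ is the factor measure on the set of leaves and the conditional measures on the leaves are (equivalent to) arclength $m_W$ with uniformly controlled densities. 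Each inner integral is bounded by ${|\psi|}_{C^\alpha(W)}\,|\phi|_w \le {|\psi|}_{C^\alpha(\cW^s)}\,|\phi|_w$ after absorbing the conditional density into the test function (its $C^\alpha$-norm is uniformly bounded by the distortion bounds underpinning the billiard construction), so integrating over $\bmu$ gives the claim with a uniform constant $C$. Finally, pass from smooth $\phi$ to general $\phi \in \cB_w$ by density: both sides are continuous in $|\cdot|_w$ (the right side trivially, the left side because $\psi \in C^\alpha(\cW^s)$ defines a bounded functional on $\cB_w$ — indeed this is essentially what the lemma asserts, so one argues on the dense subspace and extends).

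Concretely, the steps in order: (1) recall from the construction of $\cW^s$ that each leaf $W$ is a countable-mod-zero union of homogeneous admissible curves in $\cA^s$, and that ${|\psi|}_{C^\alpha(\cW^s)}$ controls the $C^\alpha(W')$-norm of $\psi$ on each such piece $W'$; (2) for smooth $\phi$, invoke the disintegration of $\mu$ over $\cF_0$ into conditional arclength-type measures on leaves, with Jacobian densities whose logarithms are uniformly $\alpha$-Hölder (standard distortion control for dispersing billiards, already in the toolbox of \cite{BaladiDemersLiverani18}); (3) bound the conditional integral on each leaf by the defining supremum of $|\cdot|_w$, after merging the density into the test function and noting its $C^\alpha$-norm stays $\le \mbox{const}\cdot{|\psi|}_{C^\alpha(\cW^s)}$; (4) integrate over the factor measure, which has total mass one, to get $|\phi(\psi)| \le C|\phi|_w{|\psi|}_{C^\alpha(\cW^s)}$; (5) extend to all $\phi \in \cB_w$ by density of smooth functions and continuity of both sides in the weak norm.

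The main obstacle is step (2)–(3): one must check that the conditional measures of $\mu$ along $\cW^s$ are absolutely continuous with respect to arclength on the leaves with densities that are uniformly bounded and uniformly $C^\alpha$ (or at least Hölder of the right exponent), so that multiplying $\psi$ by the density keeps it an admissible test function for $|\cdot|_w$ with comparable norm. This is where the geometry of the billiard enters — it is exactly the kind of distortion/regularity estimate that \cite{BaladiDemersLiverani18} establishes for their stable families, so I would cite the relevant distortion bounds there rather than reprove them; the point is to check that \cite[Lemma~2.11]{BaladiDemersLiverani18}, which presumably gives the analogous bound for $\psi$ constant along leaves or for a single curve, upgrades to the stated form by this disintegration argument. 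A secondary subtlety is the subdivision at scale $\delta_0$: summing the single-curve estimates over the (countably many) admissible pieces of a leaf must not lose a constant — this is fine because the bound $\int_{W'}\phi\,\psi\,dm_{W'} \le |\phi|_w{|\psi|}_{C^\alpha(\cW^s)}$ holds for each piece $W'$ and the pieces partition $W$, so summation is just the definition of $\int_W$.
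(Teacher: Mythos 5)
Your overall strategy matches the paper's: reduce to smooth $\phi$, disintegrate $\mu$ along $\cW^s$, bound each leaf integral by $|\phi|_w$ times a $C^\alpha$-norm, then integrate over the factor measure. But there is a genuine gap in steps (2)--(4), and it is exactly the nontrivial point the paper has to work for.

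You repeatedly assert that the conditional densities along the leaves are ``uniformly bounded and uniformly $C^\alpha$'', and conclude that integrating over the factor measure ``which has total mass one'' finishes the proof. This is false. Distortion control gives that $\log\bar\rho_\xi$ is uniformly H\"older (exponent $1/3$ here, coming from the homogeneity strips), but the density $\bar\rho_\xi$ itself is \emph{not} uniformly bounded: since the conditional measure on $V_\xi$ is normalized, one only has $|\bar\rho_\xi|_{C^0(V_\xi)} \le C|V_\xi|^{-1}$, and homogeneous stable manifolds can be (and generically are, with positive frequency) arbitrarily short. Thus after bounding each leaf integral by $|\phi|_w\,|\psi\rho_\xi|_{C^\alpha}$, the leaf-dependent factor you pick up is $|\rho_\xi|_{C^\alpha(W_{\xi,s})} \sim |V_\xi|^{-1}$, and the final estimate has the form
\[
|\phi(\psi)| \ll |\phi|_w\,|\psi|_{C^\alpha(\cW^s)}\int_\Xi |V_\xi|^{-1}\,d\lambda(\xi),
\]
whose finiteness is not a triviality about the factor measure having mass one. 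In the paper, this finiteness is precisely the ``proper standard family'' bound: it is deduced from the Chernov--Markarian estimate $\sup_{\ve>0}\ve^{-1}\bmu(r^s<\ve)\le C_0$ via the inequality $|V_\xi\cap\{r^s<\ve\}|=2\ve$ when $|V_\xi|>2\ve$, and the paper reproves \cite[Exercise~7.15]{ChernovMarkarian} at the end of the argument for completeness. Your proposal replaces this with an appeal to uniform boundedness of densities, which does not hold, so the key estimate is missing.

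A smaller inaccuracy: \cite[Lemma~2.11]{BaladiDemersLiverani18} is for test functions $\psi$ that are globally H\"older on $X$, whereas the point of Lemma~\ref{lem:key} is to allow $\psi$ that is only leafwise H\"older and may jump across stable leaves (this is what makes it usable with $\psi\in L^\infty(\cF_0)$). So this is not merely an ``upgrade by disintegration'' of a nearly identical statement; the disintegration plus the $\int|V_\xi|^{-1}\,d\lambda<\infty$ estimate is the actual content.
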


Again, due to our identification, when $\phi \in C^0(X)$, we intend $\phi(\psi) = \int_X \phi \, \psi \, d\mu$.
Lemma~\ref{lem:key} is proved at the end of this section.

\paragraph{Transfer operator}

We define the transfer operator $\cL_t$, for $t \ge 0$, by $\cL_t \phi = \phi \circ \Phi_{-t}$, for $\phi \in C^0(X_0)$.
This can be extended to any element of $\cB_w$, and more generally a distribution of order $\alpha$ by
\[
\cL_t \phi (\psi) = \phi(\psi \circ \Phi_t) , \quad \mbox{for all $\psi \in C^\alpha(\cA^s)$, $\phi \in (C^\alpha(\cA^s))^*$.}
\]
By \cite[Lemma~4.9]{BaladiDemersLiverani18}, the map $(t, \phi) \mapsto \cL_t \phi$ from $[0,\infty) \times \cB$ to $\cB$ 
is jointly continuous, so $\{ \cL_t \}_{t \ge 0}$ is a semi-group of bounded operators on $\cB$.

Define the generator of the semi-group by
$Z\phi = \lim_{t \downarrow 0} \frac{\cL_t \phi - \phi}{t}$
for $\phi \in C^1(X)$.
While $Z$ is not a bounded operator on $\cB$, the strong continuity of $\cL_t$ implies that $Z$ is closed with
domain dense in $\cB$.  Indeed, by \cite[Lemma~7.5]{BaladiDemersLiverani18} the domain of $Z$ contains all $\phi\in C^2(X) \cap C^0(X_0)$ such that
$\nabla\phi\cdot \hat\eta\in C^0(X_0)$ where $\hat\eta$ denotes the flow direction, 
and there is a constant $C>0$ such that
\begin{equation}
\label{eq:Z}
\| Z \phi \|_{\cB} \le C {|\phi|}_{C^2(X)} \quad\text{for all such $\phi$} .
\end{equation}

%%%%%%%%%%%%%%%%%%%%%%%%%%%%%%%%%%%%%%

\paragraph{Condition~(b) of Theorem~\ref{thm:main}}

Recall that $T$ and $F$ denote the time-one map for the flow and the collision map, respectively.
By the finite horizon condition, any $W \in \cW^s$ must undergo 
$k\ge \lfloor n/\tau_{\max} \rfloor$ collisions  after $n$ iterates by $T$.  
By \cite[Lemmas~3.3 and 3.4]{BaladiDemersLiverani18},
\[
\diam(T^nW) =|\Phi_nW| \le C |F^{k-1}(P^+(W))| \le C \Lambda^{-(k-1)} |P^+(W)| \le C' \Lambda^{-n/\tau_{\max}} |W|,
\]
where $\Lambda>1$ is the hyperbolicity constant defined in~\eqref{eq:Lambda}.
We have used here that the lengths of $P^+(W)$ and $W$ are bounded multiples of one another (indeed the Jacobian
of this map is $C^{\frac12}$, see \cite[Lemma~3.4]{BaladiDemersLiverani18}).

Let $\phi:X\to\R$ be $C^\eta$.
Then $\diam(\phi(T^nW))\le |\phi|_\eta\diam(T^nW)^{ \eta} \le C \Lambda^{-n \eta/\tau_{\max}}$.
%since the length of elements of $\cW^s$ have a uniform upper bound.  
Hence condition (b) holds 
with $n^{-\beta}$ replaced by $\Lambda^{-n \eta/\tau_{\max}}$.

\paragraph{Condition~(a) of Theorem~\ref{thm:main}}
By \cite[Theorem~1.4]{BaladiDemersLiverani18}, $Z$ has a spectral gap on $\cB$
%It is proved in \cite[Section 9]{BaladiDemersLiverani18} that $Z$ has a spectral gap, 
and, using results of \cite{Butterley16}, %that as an operator on $\cB$, 
$\cL_t$ admits the following decomposition:  There exists $\nu > 0$, a finite rank projector
$\Pi : \cB \to \cB$ and a family of bounded operators $\cP_t$ on $\cB$ satisfying $\Pi \cP_t = \cP_t \Pi = 0$, and a
matrix $\widehat{Z} : \Pi(\cB) \to \Pi(\cB)$ with eigenvalues $0,z_1,\dots,z_N\in\C$ satisfying $\Re z_j< -\nu$ for $j=1,\dots,N$, such that
\begin{equation}
\label{eq:L decomp}
\cL_t = \cP_t + e^{t \widehat{Z}} \Pi \quad \mbox{for all $t \ge 0$.}
\end{equation}
Moreover, there exists $C_\nu > 0$ such that for all $\phi$ in $\operatorname{Dom}(Z) \subset \cB$,
\begin{equation}
\label{eq:decay}
|\cP_t \phi |_w \le C_\nu e^{-\nu t} \| Z \phi \|_{\cB} \quad \mbox{for all $t \ge 0$.}
\end{equation}
%The eigenvalues $z_0,\dots,z_N$ correspond to the part of the spectrum of $Z$ in the half-plane Re$(z) \ge -\nu_0$, including multiplicities, for some $\nu_0 \ge \nu$.

Now suppose $\phi \in C^2(X) \cap C^0(X_0)$ is of mean zero and $\psi \in C^\alpha(\cW^s)$.  By \eqref{eq:L decomp},
\[
\SMALL
\int_X \phi \, \psi \circ \Phi_t \, d\mu = \int_X \cL_t \phi \, \psi \, d\mu
= \int_X \cP_t \phi \, \psi \, d\mu +
\int_X e^{t \widehat{Z}} \Pi \phi \, \psi \, d\mu .
\]
Hence by Lemma~\ref{lem:key},
\begin{equation}
\label{eq:split}
\Big|\int_X \phi \, \psi \circ \Phi_t \, d\mu\Big| 
\le C\Big\{ |\cP_t \phi|_w+ |e^{t \widehat{Z}} \Pi \phi |_w   \Big\}
{|\psi|}_{C^\alpha(\cW^s)}. 
\end{equation}

Letting $\Pi_0$ denote the projector corresponding to the simple eigenvalue $0$, we see that $\Pi_0 \phi = \int \phi \, d\mu=0$ since 
$\mu$ is the conformal probability measure with respect to $\cL_t$.  
Hence by Lemma~\ref{lem:inclusion},
\[
|e^{t \widehat{Z}} \Pi \phi |_w   
=|e^{t \widehat{Z}} (\Pi-\Pi_0) \phi |_w   
\le C  e^{- \nu t} |\phi |_w
\le C'  e^{- \nu t} {|\phi |}_{C^1(X)}. 
\]
By \eqref{eq:Z} and \eqref{eq:decay},
\[
|\cP_t \phi|_w 
\le C_\nu e^{-\nu t} \| Z \phi \|_{\cB} 
\le C' e^{-\nu t} {|\phi|}_{C^2(X)}. 
\]
Substituting these estimates in \eqref{eq:split}, 
\[
\SMALL | \int_X \phi \, \psi \circ T^n \, d\mu | 
= | \int_X \phi \, \psi \circ \Phi_n \, d\mu | 
\le C e^{-\nu n} {|\phi|}_{C^2(X)} {|\psi|}_{C^\alpha(\cW^s)} \quad \mbox{for all $n \ge 0$.}
\]
The result extends to $\phi \in C^\eta(X)$ as in \cite{BaladiDemersLiverani18} by a standard mollification argument.
(Exponential contraction persists with a rate dependent on $\eta$.)
In particular, there are constants $c,\,C>0$ such that
\[
\SMALL | \int_X \phi \, \psi \circ T^n \, d\mu | \le C e^{-cn} {|\phi|}_{C^\eta(X)} {|\psi|}_{C^\alpha(\cW^s)} \quad \mbox{for all $n \ge 0$,}
\]
for all $\phi \in C^\eta(X)$, $\psi\in C^\alpha(\cW^s)$.

Let $K(\cW^s)$ denote the set of bounded functions on $X$ that are constant on elements of $\cW^s$,
and let $| \cdot |_{C^0(\cW^s)} = \sup_{W \in \cW^s} | \cdot |_{C^0(W)}$.  Note that
these functions are $\cF_0$-measurable.
Moreover, $K(\cW^s) \subset C^\alpha(\cW^s)$ and 
${|\psi|}_{C^0(\cW^s)} = {|\psi|}_{C^\alpha(\cW^s)}$ for $\psi \in K(\cW^s)$.  Hence
\[
\SMALL | \int_X \phi \, \psi \circ T^n \, d\mu | \le C e^{-cn} {|\phi|}_{C^\eta(X)} {|\psi|}_{C^0(\cW^s)} \quad \mbox{for all $n \ge 0$,}
\]
for all $\phi \in C^\eta(X)$, $\psi\in K(\cW^s)$.

Finally, let $\phi \in C^\eta(X)$, $\psi\in L^\infty(\cF_0)$.  
Recall that $L^\infty(\cF_0)$ is the set of functions
in $L^\infty(\mu)$ which are $\cF_0$-measurable,
so there exists a pointwise representative $\psi'$
in the equivalence class of $\psi$ in $L^\infty(\mu)$ that is constant on local stable manifolds and such that $\sup|\psi'|=|\psi|_\infty$.
In particular, $\psi'\in K(\cW^s)$ with
${|\psi'|}_{C^0(\cW^s)}= |\psi|_\infty$ and
$\int_X|\psi-\psi'|\,d\mu = 0$.
Hence 
\[
\SMALL | \int_X \phi \, \psi \circ T^n \, d\mu | 
= | \int_X \phi \, \psi' \circ T^n \, d\mu | 
\le C e^{-cn} {|\phi|}_{C^\eta(X)} |\psi|_\infty \, .
\]
Hence condition~(a) holds with $n^{-\beta}$ replaced by $e^{-cn}$.

% \begin{rmk}
% Note that condition~(a) holds for $\phi \in C^\eta(X)$ so the observable is allowed  to be
% discontinuous at the boundary, i.e. at collisions.
% \end{rmk}

\vspace{2ex}
As promised, we end this section by proving Lemma~\ref{lem:key}.

\begin{pfof}{Lemma~\ref{lem:key}}
By density of $C^0(X_0)$ in $\cB_w$, it suffices to prove the lemma for $\phi \in C^0(X_0)$ and 
$\psi \in C^\alpha(\cW^s)$.

 The normalized Lebesgue measure $\mu$ on $X$ projects to the measure $\bmu=(2|\partial Q|)^{-1} \cos \vf \, dr d\vf$ on $M$; this is the unique smooth invariant probability measure for the billiard map $F$.
%Recall the measure on $M$, $\bmu = (2|\partial Q|)^{-1} \cos \vf \, dr \, d\vf$, defined in Section~\ref{bill set} and invariant under $F$.  
Let $\overline\cW^s$ denote the set of maximal connected homogeneous stable manifolds for $F$.  Note that
$P^+(\cW^s) = \overline\cW^s$.  Indexing elements of $\overline\cW^s$, we write $\overline\cW^s = \{ V_\xi \}_{\xi \in \Xi}$,
which defines a (mod 0) partition of $M$.
 We disintegrate $\bmu$ into conditional measures $\bmu_\xi$ on $V_\xi$, $\xi \in \Xi$, and a factor measure
 $\lambda$ on $\Xi$.  Indeed, the conditional measures are smooth on each $V_\xi$, and we can write
 \[
 d\bmu_\xi = \bar\rho_\xi \,d\bar m_\xi \,d\lambda(\xi),
 \]
where $\bar m_\xi$ is arclength measure along $V_\xi$ (in $M$), and
\begin{equation}
\label{eq:rho}
|\log \bar\rho_\xi|_{C^{\frac13}(V_\xi)} \le C, \qquad |\bar\rho_\xi|_{C^0(V_\xi)} \le C|V_\xi|^{-1},
\end{equation}
for some $C>0$ depending only on the table $Q$ (see \cite[Corollary~5.30]{ChernovMarkarian}).  
The exponent $\frac13$ comes from the definition of the homogeneity strips.
This is the standard
decomposition of $\bmu$ into a proper standard family\footnote{Standard families in \cite{ChernovMarkarian} are
standard pairs defined on local unstable manifolds, while here we use local stable manifolds.  The
decompositions of $\mu$ have equivalent properties due to the symmetry of the map $F$
under time reversal.}
(see \cite[Example~7.21]{ChernovMarkarian}).
We further subdivide $\Xi = \bigcup_{i = 1}^d \Xi_i$, where $\Xi_i$ is the index set corresponding to each component
 $M_i =  \partial B_i \times [-\frac{\pi}{2}, \frac{\pi}{2}]$ of $M$.

Write $X = \bigcup_{i=1}^d X_i$
where $X_i = \{ x \in X : P^+(x) \in M_i \}$.  
On each $X_i$, we represent Lebesgue measure as $d\mu = c \cos \vf \, dr\, d\vf\, ds$, where $c$ is a normalizing
constant, $(r,\vf)$ range over $M_i$,
and $s$ ranges from 0 to the maximum free flight time under the backwards flow, which we denote by
$t_i \le \tau_{\max}$.

Next, for each $\xi \in \Xi_i$, the flow surface $V_\xi^{-} = \{ x \in X_i : P^+(x) \in V_\xi \}$ is smoothly foliated by elements
of $\cW^s$, which are simply flow translates of one another.
For each $s$ and $V_\xi$, let $W_{\xi,s} = \Phi_{-t(s)} V_\xi$, where $t(s,z)$ is defined for $z \in V_\xi$ so that
$W_{\xi,s}$ lies in the kernel of $\omega$, i.e. it is an element of $\cW^s$.  Note that for $s < \delta_0$, some points
in $V_\xi$ may not have lifted off of $M$.  For such small times, $W_{\xi,s}$ denotes only those points that have lifted off of $M$.
Similarly, for $s > \tau_{\min}$, some part of $\Phi_{-t(s)}V_\xi$ may have collided with a scatterer.  For such times, 
$W_{\xi,s}$ only denotes those points which have not yet undergone a collision.  
Thus $\bigcup_{s \in [0, t_i]} W_{\xi,s} = V_\xi^{-}$.

Using this decomposition, we may represent Lebesgue measure on each $X_i$ by
\[
d\mu(x) = \rho_\xi(x) \, dm_{W_{\xi,s}}(x) \, d\lambda(\xi) \, ds, 
\]
where $\rho_\xi$ is smooth along each $W_{\xi,s}$, satisfying analogous bounds to \eqref{eq:rho}, since the contact
form is $C^\infty$ on $X_i$ and the projection $P^+$ is sufficiently smooth (see \cite[Lemma~3.4]{BaladiDemersLiverani18}), so that the arclength
of $W_{\xi,s}$ varies smoothly with that of $V_\xi$.   

Using the fact that each $W_{\xi,s} \in \cW^s$ can be subdivided into 
at most $C\delta_0^{-1}$ elements of $\cA^s$, we are ready to estimate
\[
\begin{split}
\Big| \int_X \phi \, \psi \, d\mu \Big| & \le \sum_{i=1}^d \Big| \int_{X_i} \phi \, \psi \, d\mu \Big| 
 \le \sum_i \Big| \int_0^{t_i} \int_{\Xi_i} \int_{W_{\xi,s}} \phi \, \psi \, \rho_\xi \, dm_{W_{\xi,s}} \, d\lambda(\xi) \, ds \Big| \\
& \le \sum_i  \int_0^{t_i} \int_{\Xi_i} C\delta_0^{-1} |\phi|_w {|\psi|}_{C^\alpha(W_{\xi,s})} {|\rho_\xi|}_{C^\alpha(W_{\xi,s})}
 \, d\lambda(\xi) \, ds \\
 & \le C\delta_0^{-1} \tau_{\max}  |\phi|_w {|\psi|}_{C^\alpha(\cW^s)} \int_{\Xi} |V_\xi|^{-1} d\lambda(\xi) \, .
\end{split}
\]
This last integral is finite by \cite[Exercise~7.15]{ChernovMarkarian} since our decomposition of $\bmu$ constitutes a
proper standard family, yielding the desired estimate for $\phi(\psi)$.

For completeness, we finish by proving \cite[Exercise~7.15]{ChernovMarkarian}.
For $x \in V_\xi$, let $r^s(x)$ denote the distance measured along $V_\xi$ from $x$ to the nearest endpoint of $V_\xi$.  By
\cite[Theorem~5.17]{ChernovMarkarian}, there exists $C_0>0$ such that 
\[
\sup_{\ve > 0}\, \frac{\bmu(x \in M : r^s(x) < \ve )}{\ve} \le C_0 \, .
\]
We claim this quantity provides an upper bound on the relevant integral.  To see this, we use the decomposition
\eqref{eq:rho} to write,
\[
\begin{split}
C_0 & = \sup_{\ve > 0} \frac{\bmu(x \in M : r^s(x) < \ve )}{\ve} = \sup_{\ve > 0 } \int_{\Xi} \frac{\bmu_\xi(r^s(x) < \ve)}{\ve} \, d\lambda(\xi) \\
& \ge \sup_{\ve > 0} \int_{\Xi} \frac{C}{|V_\xi|} \frac{|V_\xi \cap \{ r^s < \ve \}|}{\ve} \, d\lambda(\xi) \\
& \ge \sup_{\ve > 0} 2C \int_{\{\xi : |V_\xi| > 2\ve\}}  \frac{1}{|V_\xi|} \, d\lambda(\xi)
= 2C \int_{\Xi} \frac{1}{|V_\xi|} \, d\lambda(\xi) \, ,
\end{split}
\]
where we have used the fact that $|V_\xi|>0$ for $\lambda$-a.e. $\xi$, and the bound
$|V_\xi \cap \{ r^s(x) < \ve \}| = 2\ve$ if $|V_\xi| > 2\ve$.  
(One can also prove a reverse inequality, but we do not need this here.)
\end{pfof}

\paragraph{Acknowledgements}
We are very grateful to the referees for several helpful suggestions, especially with regard to the issue mentioned in Remark~\ref{rmk:error}. 

This research resulted from a Research in Pairs on {\em Techniques des martingales et espaces de Banach anisotropes (Martingale techniques  and  anisotropic Banach spaces)} at CIRM, Luminy, during August 2017.

MD was supported in part by NSF Grant DMS 1800321.
IM was supported in part by 
European Advanced Grant {\em StochExtHomog} (ERC AdG 320977).  
 MN was supported in part by NSF Grant DMS 1600780.

\end{document}